\newtheorem{thm}{\bf Theorem}[subsection]
\newtheorem{example}{Example}[subsection]
\newtheorem{prop}{Proposition}[subsection]
\newtheorem{remark}{\bf Remark}[thm]
\newtheorem{defi}{\bf Definition}[subsection]
\newtheorem{theorem}{Theorem}[subsection]
\newtheorem{lemma}{\bf Lemma}[subsection]
\newenvironment{acknowledgements}{%
\par\noindent{\scshape Acknowledgements:}\begin{rm}}{\end{rm}\newline}
\numberwithin{equation}{subsection}
\title{On the homogeneity problem of the Kazhdan-Lusztig ideals}
\date{}
\author {Adhip Ganguly and Shyamashree Upadhyay\\
\\Department of Mathematics\\ Indian Institute of Technology, Guwahati\\Assam-781039, INDIA\\email: g.adhip@iitg.ac.in\\
\\
Department of Mathematics\\ Indian Institute of Technology, Guwahati\\Assam-781039, INDIA\\email: shyamashree@iitg.ac.in}
\begin{document}
\maketitle



\begin{abstract}
In this paper, we identify some sufficient conditions for a Kazhdan-Lusztig ideal to be inhomogeneous. Also, we attempt to approach the problem of giving some necessary and sufficient conditions for a Kazhdan-Lusztig ideal to be ``standard homogeneous''.
\end{abstract}
\noindent \textbf{Mathematics subject classification (2020):}  05E14; 05E40; 14N10; 13C70.\\
\noindent \textbf{Keywords:} 
Inhomogeneous. Kazhdan-Lusztig ideal. Standard homogeneous. Path. Determinant.\\



\tableofcontents
\section{Introduction}\label{s.Introduction}
Let $\mathbb{C}$ denote the set of complex numbers, $GL_n(\mathbb{C})$ be the set of all invertible $n\times n$ matrices over $\mathbb{C}$, we often will refer to this as $GL_n$ as the underlying field will be $\mathbb{C}$ throughout the paper unless mentioned otherwise. Let $\mathbb{N}$ denote the set of natural numbers and for any $n\in \mathbb{N}$, let $[n] := \{1.2,\dots ,n\}$. Let $S_n$ denote the symmetric group on $n$ letters.\par 
The Kazhdan-Lusztig ideal is a type of determinantal ideal, and is algebraically defined as the ideal of the kazhdan-Lusztig variety which is the intersection of a Schubert variety and an opposite Schubert cell (see \S 3 of \cite{woo2023schubert}). In this paper, we produce some results pertaining to the homogeneity problem of the Kazhdan-Lusztig ideals and make an attempt to find necessary and sufficient conditions on $v,w\in S_n$, for which this ideal is homogeneous.\par 
In the paper \cite{neye2023grobner}, Emmanuel Neye has shown that if $v$ is 321- avoiding or $w$ is 132-avoiding, then $I_{v,w}$ is standard homogeneous (see \cite[Proposition 6.4]{neye2023grobner}). The same proposition also gives certain conditions for $I_{v,w}$ to be ``standard homogeneous'' in case $v$ is non 321-avoiding and $w$ is not 132-avoiding.
For more analysis and different results regarding the topic, one may check \cite{woo2012grobner} and \cite{woo2008governing}. \par
Now, as is clear from the definition, to define the Kazhdan-Lusztig ideal, one needs to define what are Schubert varieties and opposite Schubert cells. A Schubert variety is the Zariski closure a Schubert cell, which is an element of a certain kind of projective algebraic variety called the ``Flag variety''. The ``Flag variety'' is a collection of objects called ``complete flags". We begin with the definition of complete flags.\par
A \textit{complete flag} in $\mathbb{C}^n$ is a nested sequence of subspaces 
\[
F_{\bullet} = F_1 \subseteq F_2 \subseteq \dots \subseteq F_{n-1}  
\]
of $\mathbb{C}^n$, with $dim F_i=i$ for all $i$. The flag variety $Flags \left( \mathbb{C}^n \right)$ is the parameter space of all complete flags in $\mathbb{C}^n$.
\par
The group $GL_n$ acts transitively on $Flags \left( \mathbb{C}^n \right).$ Let $B$ denote the subgroup of $GL_n$ consisting of all $n\times n$ invertible upper triangular matrices. By the orbit stabilizer theorem, one can identify $Flags \left( \mathbb{C}^n \right)$ with $GL_n/B$. Using this identification one can define Schubert cells in $Flags(\mathbb{C}^n)$ as the $B$-orbits under left multiplication.\par
Let $S_n$ be the symmetric group on [n]. If $w \in S_n$, we define the complete flag
\begin{equation}
	E_{\bullet}^{\left( w \right)} = {\langle \vec{0} \rangle} \subseteq {\langle \vec{e}_{w\left( 1 \right)}\rangle} \subseteq {\langle \vec{e}_{w\left( 1 \right)}, \vec{e}_{w\left( 2 \right)}\rangle} \subseteq \dots {\langle \vec{e}_{w\left( 1 \right)}, \vec{e}_{w\left( 2 \right)}, \dots ,\vec{e}_{w\left( i \right)}\rangle} \subseteq \dots \subseteq \mathbb{C}^n \notag
\end{equation}
The map
\begin{align}
	f : S_n \longrightarrow T' \notag \\ 
	w \mapsto E_{\bullet}^{(w)} \notag
\end{align}
gives a bijection between $S_n$ and $T'= T$-fixed points of $Flags\left(\mathbb{C}^n \right)$ where $T$ is the group of invertible diagonal matrices in $GL_n$.\par
This, together with the well known fact that each $B$-orbit of $GL_n/B$ ($\cong Flags(\mathbb{C}^n)$) contains a unique $T$-fixed point, gives us a bijection between $S_n$ and Schubert cells. Using this, one can define Schubert cells  $X_w^{\circ}$ as the $B$-orbit $B\cdot E_{\bullet}^{\left( w \right)}$. The \textit{Schubert variety} $X_w$ is the Zariski closure of $X_w^{\circ}$.
\par
In our problem, we use the definitions of Schubert cells and varieties that are in terms of the rank matrix (defined below). One can show that these two definitions are equivalent.\par 
Given $w\in S_n$, the \textit{rank matrix of $w$}, denoted $R_w$,is the matrix 
\[
R_w :=[r_{p,q}^{(w)}]_{p,q=1}^n
\]
where the entry $r_{p,q}^{(w)}= \# \{\ k | \ k \leq q, \ w(k) \leq p\}$.\par 
	Using this, an alternate definition of $Schubert \ cell$ is given by 
\begin{equation}
	X_w^{\circ} = \{F_{\bullet} \ | \ dim(E_p \cap F_q) = r_{p,q}^{(w)} \ \forall \ p,q \}.\notag
\end{equation}
The Schubert variety can be defined by relaxing the equalities to inequalities, so
\begin{equation}
	X_w = \{F_{\bullet} \ | \ dim(E_p \cap F_q) \geq r_{p,q}^{(w)} \ \forall \ p,q \}.\notag
\end{equation}
Similarly, one can consider $B\_$ = the group of all invertible lower triangular matrices and define for any $w\in S_n$, The \textit{Opposite Schubert cell} as the B\_ -orbit
\[
\Omega_w^{\circ} := B\_ \cdot E_{\bullet}^{(w)} \subseteq Flags(\mathbb{C}^n) \cong GL_n/B.
\]
Analogous to $R_w$, set $\Tilde{R}_w$ as the matrix $\Tilde{R}_w$ := $[\Tilde{r}_{p,q}^{(w)}]_{p,q=1}^n$ where the entry
\begin{equation}
	\Tilde{r}_{p,q}^{(w)} = \#\{ k \ | \ k \leq q, w(k) \geq p \}. \notag
\end{equation}
Then, using this, we get an analogous definition for opposite Schubert cells (similar to Schubert cells):
	The opposite Schubert cell $\Omega_w^{\circ}$ is defined as :
$\Omega_w^{\circ}$ = $\{F_{\bullet} \ | \ E_{n+1-p}^{'} \cap F_q = \Tilde{r}_{p,q}^{(w)}\}$ where, $E_{n+1-p}^{'}=\langle e_p, \dots, e_n \rangle$. One defines \textit{opposite Schubert varieties} by $\Omega_w = \overline{\Omega_w^{\circ}}$ (the Zariski closure).\par 
 $\Omega_{id}^{\circ}$, called the \textit{opposite big cell}, is an affine open neighborhood of $Flags(\mathbb{C}^n)$. Hence $v\cdot \Omega_{id}^{\circ} \cap X_w $ is an affine open neighborhood of $X_w$ centered at $E_{\bullet}^{(v)}$. The Kazhdan-Lusztig ideals which we will be working on, were introduced to study affine open neighborhoods of Schubert varieties at a $T$-fixed point.\par 
 Suppose $X \subset Flags\mathbb({C}^n)$ is any subvariety.	The $patch$ of X at any point $gB \in GL_n/B$ is the affine open neighborhood $(g\cdot\Omega_{id}^{\circ})\cap X$. With the notations at hand, we have the following lemma (from \cite{kazhdan1979representations}):\\
 \textbf{Lemma :} 	$ X_w \cap (P^{(x)}\cdot \Omega_{id}^{\circ}) \cong (X_w \cap \Omega_x^{\circ}) \times \mathbb{A}^{\ell(x)}
 $.\\
 For a proof of this lemma, one can see Lemma A.4 of \cite{kazhdan1979representations}.
 By virtue of this lemma, one can replace the patch $ X_w \cap (P^{(x)}\cdot \Omega_{id}^{\circ})$ with a variety of smaller dimension, by ignoring the irrelevant affine factor $\mathbb{A}^{\ell(x)}$. That is, it suffices to study $ X_w \cap \Omega_{v}^{\circ}$. This leads to the definition of the Kazhdan-Lusztig variety $N_{v,w} := X_w \cap \Omega_{v}^{\circ}$.\par
 Let $Mat_{n\times n}$ be the space of all $n \times n$ complex matrices. Consider the ring $\mathbb{C}$[\textbf{z}] where \textbf{z}=$\{z_{ij}\}_{i,j=1}^n$ are the functions on the entries of a generic matrix Z of $Mat_{n\times n}$. Here,\\
 $z_{ij} = \text{the entry in the $i$-th row from the $bottom$, and the $j$-th column to the left.}$
 \par
 In this notation, we see that $\Omega_v^{\circ}$ consists of matrices $Z^{(v)}$ where $z_{n-v(i)+1,i}$ = 1, and $z_{n-v(i)+1,s}$ = 0, $z_{t,i}$ = 0 for $s>i$ and $t>n-v(i)+1$. Let \textbf{z}$^{(v)} \subset \textbf{z} $ be the unspecialized variables.\\
 Furthermore, let $Z_{st}^{(v)}$ be the southwest $(n-s+1) \times t$ submatrix of $Z^{(v)}$.\\
 The \textit{Kazhdan-Lusztig ideal} (KL-ideal) is the ideal $I_{v,w} \subset \mathbb{C}$[\textbf{z$^{(v)}$}] generated by all $\Tilde{r}_{st}^{(w)} + 1$ minors of $Z_{st}^{(v)}$
 where $1 \leq s,t\leq n$ and $\Tilde{r}_{st}^{(w)}$=$\#\{k \ | \ k\leq t,w(k)\geq s\}$.\\
 One can check that $N_{v,w}$ is set-theoretically cut out by $I_{v.w}$, i.e, $p\in N_{v,w}$ iff $p$ is a zero of the generators of $I_{v,w}$.\par 
 
 Equivalently, one can readily see that the \textit{Kazhdan-Lusztig ideal} $I_{v,w}$ can also be defined as follows: It is the ideal of $\mathbb{C}$[\textbf{z$^{(v)}$}] generated by all $\Tilde{r}_{n-s+1,t}^{(w)}+1$ minors of $Z_{st}^{(v)}$, where $Z_{st}^{(v)}$ denotes the southwest $s\times t$ submatrix of $Z^{(v)}$, and $\Tilde{r}_{n-s+1,t}^{(w)}=\#\{k \ | \ k\leq t,w(k)\geq n-s+1\}$, where $1\leq s,t\leq n$.\\
 In this paper, we work with the previous definition mostly except maybe at one or two places where it will be clear from the context .\par 
 
 $I_{v,w}$ may contain, among its generators, inhomogeneous polynomials. We say $I_{v,w}$ is \textit{standard homogeneous} if there exist homogeneous polynomials that generate $I_{v,w}$. In this case, we also say $N_{v,w}$ is standard homogeneous. In this paper, we make an attempt to classify all $(v,w) \in S_n \times S_n$ such that $I_{v,w}$ is standard homogeneous, and we also give some sufficient conditions on $v$ and $w$ (in $S_n$) for which the KL-ideal $I_{v,w}$ is inhomogeneous.\\
 \noindent\textbf{Organization of the paper:} In \S \ref{s.stateproblem}, we define objects called the ``defining minors'' of the KL ideal (which are the defining generators of the KL ideal), and then we state the problem we consider in this paper. In \S \ref{s.discw}, we show that in fact, for the purpose of our problem, we can discard the permutation $w=n\ n-1\ \cdots\ 2\ 1$. In this section, we also show that for the purpose of our problem, we can also discard those pairs $(v,w)\in S_n\times S_n$ such that $\Tilde{R}_v\nleq\Tilde{R}_w$ (see Definition \ref{d.AnleqB} for the meaning of this notation). In \S \ref{s.redminors}, we give a formula of an arbitrary element in the rank matrix $\tilde{R_w}$ and also show a systematic way to remove redundant defining minors. In \S \ref{s.resonhom}, we provide a necessary condition for homogeneity of any ideal in a polynomial ring over $\mathbb{C}$. In \S \ref{s.paths}, we first introduce the notion of paths, its relation with the determinant, and we use the definition of a path to derive a necessary and sufficient condition for a minor to have an inhomogeneous determinant. In this section, we have also given conditions for when a minor inside a submatrix $Z_{st}^{(v)}$ of $Z^{(v)}$ is singular. Thirdly, in this section, we also give a necessary and sufficient condition for when one term of one determinant of a minor divides a term of another such determinant. In this section, we also provide a theorem which gives a necessary and sufficient condition for a minor in $Z^{(v)}$ to have inhomogeneous determinant. In \S \ref{s.whichwtodiscard}, we have used the contents of the previous sections to give a flowchart or an algorithmic approach to determine when a KL ideal $I_{v,w}$ is inhomogeneous. In \S \ref{s.iffconditionHom}, we provide a procedure called ``mutation'', which attempts to give us a possible way to deduce necessary as well as sufficient conditions for the homogeneity of a KL-ideal, modulo some checking at different steps. We admit that we have not yet been able to deduce any concrete set of necessary as well as sufficient conditions from this procedure called ``mutation'' because of its random nature.  
\section{Stating our problem}\label{s.stateproblem}
The kazhdan-Lusztig (KL)-ideal is as defined in \S \ref{s.Introduction}.
\subsection{The ``defining minors" of the KL ideal.}\label{ss.defminors}
In the introduction, we have defined the Kazhdan-Lusztig ideal and notations $Z^{(v)}$ and $Z_{st}^{(v)}$. We follow those notations from here on.\\
All minors of $Z^{(v)}$ are of the form:
\[
\begin{bmatrix}
	z_{i_pj_1} & z_{i_pj_2} & \dots & z_{i_pj_p}\\ 
	z_{i_{p-1}j_1} & z_{i_{p-1}j_2} & \dots & z_{i_{p-1}j_p}\\
	\vdots & \vdots &\vdots & \vdots\\
	z_{i_1j_1} & z_{i_1j_2} & \dots & z_{i_1j_p}
\end{bmatrix}
\]
which are of size $p$, where 
\[
z_{i_kj_q}=\begin{cases}
1 & \text{if}\quad i_k=n-v(j_q)+1,\\
0 & \text{if}\quad i_k > n-v(j_q)+1\\
0 & \text{if}\quad i_k=n-v(j)+1\ \mbox{for}\ \mbox{some}\ j < j_q\\
z_{i_kj_q} & \text{otherwise.}	
\end{cases}	
\]
where $i_1<i_2<\dots<i_p$ , $j_1<j_2<\dots<j_p$ and  $1\leq p\leq n-1$.\par 
Notice that $p\leq n-1 $ follows from the fact that $\tilde{r}_{1,n}^{(w)}=n$. In fact, $p\leq n-2$ for all $w\in S_n$ that send n to 1.\par 
Also, if $p=\tilde{r}_{s,t}^{(w)}+1$ for some $s,t \in\{1,2,\dots,n\}$, then $i_p\leq n-s+1, j_p\leq t$ and $p\leq min\{n-s+1,t\}$.\par 
We  call these $p\times p$ minors as the \textbf{defining minors of the KL ideal}.
\begin{example}\label{e.defminors}
Let $v=2314 $ and $w=4213$ be elements of $S_4$.\\
	Then, \[
	\tilde{R}_w=	
	\begin{bmatrix}
		1&2&3&4\\
		1&2&2&3\\
		1&1&1&2\\
		1&1&1&1
	\end{bmatrix}
	\quad
	Z^{(v)}=
	\begin{bmatrix}
	0&0&1&0\\
	1&0&0&0\\
	z_{21}&1&0&0\\
	z_{12}&z_{12}&z_{13}&1
	\end{bmatrix}	
	\]
and, the defining minors are :
\[
\begin{bmatrix}
	z_{21}&1\\
	z_{11}&z_{12}
\end{bmatrix},
\quad
\begin{bmatrix}
	z_{21}&0\\
	z_{11}&z_{13}
\end{bmatrix},
\quad
\begin{bmatrix}
	1&0\\
	z_{11}&z_{13}
\end{bmatrix} \textit{ and }
\begin{bmatrix}
	1&0&0\\
	z_{21}&1&0\\
	z{11}&z_{12}&z_{13}
\end{bmatrix}.
\]
Notice that $\nexists$ any defining minors for the southwest $s\times 1$ and $1\times t$ matrices $Z_{n-s+1,1}^{(v)}$ and $Z_{n,t}^{(v)}$ respectively for any $s,t \in \{1,2,\dots,n\}$ as $\tilde{r}_{s1}^(w)=1=\tilde{r}_{1t}^{w}$ $ \forall \ 1\leq s,t \leq 4$ and hence $\tilde{r}_{s1}^{(w)}$ $(\textnormal{ or } \tilde{r}_{1t}^{(w)})+1 \nleq min\{s,1\}(\textnormal{ or } min\{1,t\})=1$.
\end{example}

\subsection{The Problem}\label{ss.problem}
Let $I_{v,w}$ be as in \S \ref{s.Introduction}. The problem is to classify all $(v,w) \in S_n \times S_n$ such that $I_{v,w}$ is standard homogeneous (the notion of standard homogeneity is as given in \S \ref{s.Introduction}).\par 
In this paper, we provide a partial solution to the above mentioned problem, namely, we provide certain sufficient conditions on $v,w\in S_n$ for which the KL-ideal $I_{v,w}$ is inhomogeneous. We also attempt to approach the problem of giving some necessary and sufficient conditions for a KL-ideal to be standard homogeneous.
\section{Which $(v,w)$'s to discard initially?}\label{s.discw}
In this section, we provide two results, which help us to discard certain pairs $(v,w)\in S_n\times S_n$ while proceeding with our search for inhomogeneous KL-ideals.
\subsection{Two results}\label{ss.aresult}
\begin{lemma}\label{l.nodefminors}
	There are no defining minors for $I_{v,w}$ ,i.e, $I_{v,w}=\emptyset \Longleftrightarrow w=n \ n-1 \ n-2 \ \dots \ 1$. 
\end{lemma}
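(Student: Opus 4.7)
The plan is to reformulate the condition ``$I_{v,w}$ has no defining minors'' into a purely combinatorial identity on the rank matrix $\tilde{R}_w$, and then show this identity singles out a unique $w$. Recall from \S\ref{ss.defminors} that a defining minor of size $p = \tilde{r}_{s,t}^{(w)}+1$ exists inside $Z_{st}^{(v)}$ (of shape $(n-s+1)\times t$) only if $\tilde{r}_{s,t}^{(w)}+1 \leq \min\{n-s+1,\, t\}$. Hence $I_{v,w}$ has no defining minors if and only if
\[
\tilde{r}_{s,t}^{(w)} \;\geq\; \min\{n-s+1,\, t\} \qquad \text{for every } s,t \in [n].
\]
On the other hand, from the definition $\tilde{r}_{s,t}^{(w)} = \#\{k\leq t : w(k)\geq s\}$, we see immediately that $\tilde{r}_{s,t}^{(w)} \leq t$ (the index set has size $t$) and $\tilde{r}_{s,t}^{(w)} \leq n-s+1$ (since $w$ is a bijection and $\{s,s+1,\dots,n\}$ has $n-s+1$ elements). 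So in fact $\tilde{r}_{s,t}^{(w)} \leq \min\{n-s+1,\,t\}$ always, and the absence of defining minors is equivalent to the equality $\tilde{r}_{s,t}^{(w)} = \min\{n-s+1,\,t\}$ holding for every $s,t$.

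For the $(\Longleftarrow)$ direction, I plan to plug $w_0 := n\ (n-1)\ \cdots\ 2\ 1$ (so $w_0(k)=n-k+1$) directly into the definition:
\[
\tilde{r}_{s,t}^{(w_0)} = \#\{k\leq t : n-k+1 \geq s\} = \#\{k : 1\leq k \leq \min\{t,\,n-s+1\}\} = \min\{n-s+1,\,t\},
\]
which verifies the equality and hence shows $I_{v,w_0}$ has no defining minors.

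For the converse $(\Longrightarrow)$, suppose $\tilde{r}_{s,t}^{(w)} = \min\{n-s+1,\,t\}$ for all $s,t$. I will specialize to the diagonal $s = n-k+1$, $t = k$, obtaining $\tilde{r}_{n-k+1,\,k}^{(w)} = k$; this means the set $\{w(1),\dots,w(k)\}$ (of cardinality $k$) is contained in $\{n-k+1,\dots,n\}$ (of cardinality $k$), so equality of sets holds. Comparing this for $k$ and $k-1$ forces $w(k) = n-k+1$, and induction on $k$ yields $w = w_0$. I do not expect any real obstacle: the argument is elementary bookkeeping, and the only slightly subtle point is noticing that a single diagonal of equalities already pins down $w$ completely, so the full strength of the hypothesis on all $(s,t)$ is not needed for the converse.
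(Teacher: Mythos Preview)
Your proof is correct and rests on the same pivot as the paper's---the diagonal identity $\tilde{r}_{n-k+1,k}^{(w)}=k$ forces $\{w(1),\dots,w(k)\}=\{n-k+1,\dots,n\}$ and hence $w=w_0$---but your packaging is considerably cleaner. The paper argues $(\Rightarrow)$ by first treating $q=1$ separately, then for $q\geq 2$ splitting into the ranges $p>n-q+1$ and $p\leq n-q+1$, and only after this case analysis isolating the diagonal value $\tilde{r}_{n-q+1,q}^{(w)}$. You bypass all of that with the single observation that $\tilde{r}_{s,t}^{(w)}\leq\min\{n-s+1,t\}$ holds a priori, so ``no defining minors'' collapses to the \emph{equality} $\tilde{r}_{s,t}^{(w)}=\min\{n-s+1,t\}$ for every $(s,t)$; from there the diagonal specialization is immediate and the induction is transparent. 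The paper's $(\Leftarrow)$ direction likewise checks the two ranges $s\leq n-t+1$ and $s>n-t+1$ separately, whereas your one-line computation of $\tilde{r}_{s,t}^{(w_0)}$ handles both at once. So: same mathematical content, but your route is shorter and more conceptual.
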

\begin{proof}
$\implies$: Let $p,q\in\{1,\ldots,n\}$ be arbitrary. First, suppose that $q=1$. Starting with a matrix $Z^{(v)}$, first look at the $p\times 1$ sub-matrix. If $\tilde{r}_{p1}^{(w)}=0$ for any $p$, then $\exists$ a minor of size 1 and so, the set of generators of $I_{v,w}$ is non empty. So, we must have $\tilde{r}_{p1}^{(w)}=1$ $\forall$ $1\leq p\leq n$ which holds iff $w(1)=n$. Hence we have $w(1)=n$.\\
Now, clearly for $q\geq 2$, if $\tilde{r}_{pq}^{(w)}=q$, then there are no minors for those $(n-p+1) \times q$ sub-matrices. Notice that $\tilde{r}_{pq}^{(w)}=q \iff p\leq min\{w(i)|i\leq q\}$. Hence, $\tilde{r}_{pq}^{(w)}\neq q$ $\forall$ $p> min\{w(i)|i\leq q\}$. Also, as $q\geq 2$, $min\{w(i)|i\leq q\}<n$ $\implies \exists p$ such that $\tilde{r}_{pq}^{(w)}<q$. For these $Z_{pq}^{(v)}$ (that is, for those $Z_{pq}^{(v)}$, for which $q\geq 2$ and $p$ is such that $\tilde{r}_{pq}^{(w)}<q$), to guarantee that there are no minors that can be taken, we must have $n-p+1< \tilde{r}_{pq}^{(w)}+1$. Using $\tilde{r}_{pq}^{(w)}<q$, we have $ n-p+1<q$, which is true if and only if $n-q+1<p$. This says that for all $p > n-q+1$, we have that although, $\tilde{r}_{pq}^{(w)}<q$, but $\tilde{r}_{pq}^{(w)}+1 > n-p+1$, i.e, no minors are contributed from these $(n-p+1)\times q$ sub-matrices of $Z^{(v)}$.\par 
Now, we focus our attention to $p\in \{1,2,\dots , (n-q+1)\}$ (that is, $p\leq n-q+1$) such that $\tilde{r}_{pq}^{(w)}<q$. For all such $p$, we must have again that, $\tilde{r}_{pq}^{(w)}+1 > n-p+1$. Putting $p=n-q+1$, we get: $\tilde{r}_{n-q+1,q}^{(w)}+1>q$ $\iff \tilde{r}_{n-q+1,q}^{(w)}>q-1$. Now it follows from some basic properties of the matrix $\Tilde{R}_w$ that we must have $\tilde{r}_{n-q+1,q}^{(w)}=q$, which in turn implies that $min\{w(i)|i\leq q\}\geq n-q+1$ (by definition of the $\Tilde{R}_w$ matrix).\par 
On the other hand, we have $min\{w(i)|i\leq q\}\leq n-q+1$ simply by counting. Hence together, we have : $min\{w(i)|i\leq q\}=n-q+1$.\par 
Now, this is true for all $q\geq 2$. Taking $q=2$ in the above, we get $min\{w(i)|i\leq 2\}=n-1$, and as we already have $w(1)=n$, it must be that $w(2)=n-1$. Iteratively, in a similar manner, we see that for each $1\leq i\leq n$, $w(i)=n-i+1$, i.e, $w=n \ n-1\  n-2 \ \dots 1$. \\ \\
$\impliedby$: Let $w_0=n\ n-1\ n-2\ \dots\ 1$. For $w=w_0$, we have:
$$\tilde{r}_{st}^{(w)}=\left\{\begin{array}{cc}
  t & \forall s\in\{1,\ldots,n-t+1\}\\
  \alpha & \mbox{for}\ s=n-\alpha+1\ \mbox{where}\ 1\leq\alpha\leq t-1
                              \end{array}
\right.$$
Thus, clearly for $s\in\{1,\ldots,n-t+1\}$, we have $\tilde{r}_{st}^{(w)}+1=t+1>min\{n-s+1,t\}$. And for $s\notin\{1,\ldots,n-t+1\}$, it must be that $s=n-\alpha+1$ for some $\alpha$ such that $1\leq\alpha\leq t-1$. For $s$ of the later type, $\tilde{r}_{st}^{(w)}+1=\alpha+1$, that is, we are supposed to take an $(\alpha+1)$ minor in a southwest $(n-s+1)\times t=\alpha\times t$ matrix, which is absurd. That is, for any fixed $t$, there does not exist any $s$ such that the minors of $Z_{st}^{(v)}$ make sense. Hence $I_{v,w}=\emptyset$.
\end{proof}
Lemma \ref{l.nodefminors} above tells us that we all discard all pairs $(v,w)$ for which $w=n\ n-1\ n-2\ \dots\ 1$. Next, we will prove a theorem, which will tell us that we can also discard those pairs $(v,w)$ for which $\Tilde{R}_v\nleq\Tilde{R}_w$.
\begin{defi}\label{d.AnleqB}
We say for any two matrices $A=[a_{ij}]$ and $B= [b_{ij}]$, $A\leq B$ iff $a_{ij}\leq b_{ij}$ for each $i,j$ . Else, we say that $A\nleq B$.
\end{defi}

\begin{theorem}\label{t.wholeRing}
	The Kazhdan-Lusztig Ideal $I_{v,w}$ is whole ring $\mathbb{C}[z_{ij}]$ if and only if $\tilde{R}_v \nleq \tilde{R}_w$.
\end{theorem}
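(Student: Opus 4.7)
The plan is to invoke Hilbert's Nullstellensatz. Since $\mathbb{C}$ is algebraically closed, one has $I_{v,w}=\mathbb{C}[\mathbf{z}^{(v)}]$ if and only if $V(I_{v,w})=\emptyset$. The introduction records that $V(I_{v,w})=N_{v,w}=X_w\cap\Omega_v^{\circ}$ set-theoretically, so the theorem reduces to proving the geometric equivalence
\[
X_w\cap\Omega_v^{\circ}=\emptyset\quad\Longleftrightarrow\quad\tilde R_v\nleq\tilde R_w.
\]

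For the ``$\Longleftarrow$'' direction, assume $\tilde R_v\le\tilde R_w$. I would exhibit the $T$-fixed flag $E_\bullet^{(v)}$ as an explicit point of $N_{v,w}$: it lies trivially in $\Omega_v^{\circ}$, and it corresponds to the specialization of $Z^{(v)}$ in which every free variable is set to $0$. Under this specialization the southwest block $Z_{st}^{(v)}$ reduces to the partial permutation matrix whose $1$'s sit at the positions $(v(i),i)$ with $i\le t$ and $v(i)\ge s$, so $\mathrm{rank}\bigl(Z_{st}^{(v)}\bigr)=\tilde r_{st}^{(v)}\le\tilde r_{st}^{(w)}$. Every $(\tilde r_{st}^{(w)}+1)$-minor therefore vanishes at $E_\bullet^{(v)}$, so this point lies in $V(I_{v,w})$ and the ideal is proper.

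For the ``$\Longrightarrow$'' direction, fix $(s,t)$ with $\tilde r_{st}^{(v)}>\tilde r_{st}^{(w)}$. I would argue that for \emph{every} $F_\bullet\in\Omega_v^{\circ}$ with matrix representative $Z$, the rank of the southwest block $Z[s{:}n,1{:}t]$ is already at least $\tilde r_{st}^{(v)}$, which forces $F_\bullet\notin X_w$ and hence $N_{v,w}=\emptyset$. The rank lower bound rests on the complementary decomposition $\mathbb{C}^n=E_{s-1}\oplus E'_{n+1-s}$: the projection-kernel identity gives $\mathrm{rank}(Z[s{:}n,1{:}t])=t-\dim(F_t\cap E_{s-1})$; the subspaces $F_t\cap E_{s-1}$ and $F_t\cap E'_{n+1-s}$ meet trivially inside $F_t$, so their dimensions sum to at most $t$; and the defining equality of $\Omega_v^{\circ}$ gives $\dim(F_t\cap E'_{n+1-s})=\tilde r_{st}^{(v)}$. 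Combining these three facts yields $\mathrm{rank}(Z[s{:}n,1{:}t])\ge\tilde r_{st}^{(v)}>\tilde r_{st}^{(w)}$, as required.

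The hard part will be tracking the several layers of indexing---southwest versus northwest blocks, $R$ versus $\tilde R$, and the shift appearing in $E'_{n+1-s}$---so that the dictionary between ranks of submatrices of $Z$ and dimensions of flag intersections is applied correctly; once that dictionary is in place the argument reduces to the short dimension count above. A purely combinatorial alternative would try to display, for some $(s,t)$ with $\tilde r_{st}^{(v)}>\tilde r_{st}^{(w)}$, an explicit $(\tilde r_{st}^{(w)}+1)$-minor of $Z_{st}^{(v)}$ equal to a non-zero scalar: this works immediately when $\tilde r_{st}^{(w)}=0$, since the offending $1$-entry of $Z_{st}^{(v)}$ is itself a defining generator, but in general no single defining minor of $I_{v,w}$ is a constant, and the Nullstellensatz route above seems the cleanest path.
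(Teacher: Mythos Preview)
Your argument is correct and takes a genuinely different route from the paper. The paper gives a purely combinatorial proof that never leaves the polynomial ring: it relies on Observation~1 (any minor of $Z^{(v)}$ has determinant either $\pm 1$ or a polynomial with no constant term). For the direction $\tilde R_v\nleq\tilde R_w\Rightarrow 1\in I_{v,w}$, the paper picks $(s,t)$ with $\tilde r_{st}^{(v)}\ge\tilde r_{st}^{(w)}+1=:b$, takes the $b$ columns $i_1,\ldots,i_b$ with $i_k\le t$ and $v(i_k)\ge s$, and the corresponding $b$ rows containing their $1$'s; the resulting $b\times b$ minor has a $1$ in each row, so by Observation~1 its determinant is literally $\pm 1$. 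For the converse, if $1\in I_{v,w}$ then again by Observation~1 some defining minor is $\pm 1$, forcing at least $\tilde r_{st}^{(w)}+1$ rows of $Z_{st}^{(v)}$ to carry a $1$, i.e.\ $\tilde r_{st}^{(v)}>\tilde r_{st}^{(w)}$.

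So your closing remark is mistaken: when $\tilde R_v\nleq\tilde R_w$ there \emph{always} exists a single defining minor equal to a nonzero scalar, and the paper exhibits it explicitly. Your Nullstellensatz/flag-dimension approach is cleaner conceptually and sidesteps any analysis of the minors themselves, but the paper's combinatorial route has the advantage of producing an explicit unit among the generators and of reusing Observation~1, which is needed anyway for the homogeneity results in \S\ref{s.paths}.
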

\begin{proof}
	Suppose $I_{v,w}$ contains 1. Then, by \hyperref[o.o1]{Observation 1} (which appears in a later section, namely, \S \ref{s.paths} as an independent result), there are some $s,t \in \{1,\dots, n\}$ such that in the southwest $(n-s+1)\times t$ matrix $Z_{st}^{(v)}$, at least one of its  $\tilde{r}_{s,t}^{(w)}+1$ minors has determinant $1$ or $-1$. This implies that there exist at least $\tilde{r}_{st}^{(w)}+1$ many rows in the southwest $(n-s+1)\times t$ matrix $Z_{st}^{(v)}$, such that they have $1$ in them. Now, a row has a $1$ in it means that there exists some $i$-th column such that the corresponding $v(i)$ value equals the row number from the top. Here, any such $i$ must satisfy $i\leq t$ because we are in the southwest $(n-s+1)\times t$ matrix. As there are totally $n$ rows in $Z^{(v)}$, and $n-(n-s+1)=s-1$, so it must be that there are $(s-1)$ rows above the matrix $Z_{st}^{(v)}$. This will mean that for the $i$'s as mentioned above, the $v(i)$-value is a row number strictly greater than $(s-1)$. That is, $v(i)\geq s$.\par
	So the fact that there exist at least $\tilde{r}_{st}^{(w)}+1$ rows with $1$ in them (in the south-west submatrix $Z_{st}^{(v)}$) $\implies$ there exist at least $\tilde{r}_{st}^{(w)}+1$ many $i$'s such that $i\leq t$ and $v(i)\geq s$. That is, $\#\{i\leq t|v(i)\geq s\}\geq\tilde{r}_{st}^{(w)}+1$, which is the same as saying that $\tilde{r}_{st}^{(v)}\geq\tilde{r}_{st}^{(w)}+1$. Hence, for this particular $s,t$, we have $\tilde{r}_{st}^{(v)}>\tilde{r}_{st}^{(w)}$. So we cannot have $\Tilde{R}_v\leq\Tilde{R}_w$. That is, we must have $\Tilde{R}_v\nleq\Tilde{R}_w$.\par
	
	On the other hand, if $\tilde{R}_v \nleq \tilde{R}_w$ then, $\exists$ some $s,t$ such that $\tilde{r}_{s,t}^{(v)} > \tilde{r}_{s,t}^{(w)}$. This implies $\tilde{r}_{s,t}^{(v)} \geq \tilde{r}_{s,t}^{(w)}+1$. Notice that the previous line also says that $\tilde{r}_{s,t}^{(w)}+1 \leq min\{n-s+1,t\}$ and thus $\tilde{r}_{s,t}^{(w)}+1$ minor in  $Z_{st}^{(v)}$ makes sense. Let $\tilde{r}_{st}^{(w)}+1=b$. Then because $\tilde{r}_{st}^{(v)}\geq\tilde{r}_{st}^{(w)}+1=b$, it follows that $\#\{i\leq t|v(i)\geq s\}=\tilde{r}_{st}^{(v)}\geq b$. This means that there exists at least $b$ many $i$'s (say, $i_1,\ldots,i_b$) such that $i\leq t$ and $v(i)\geq s$. This implies that in the southwest $(n-s+1)\times t$ matrix, there exist at least $b$ rows that have $1$ in them, because $v(i)\geq s$. For all those rows (say, $\alpha_1,\ldots,\alpha_b$), it must have been that $v(i_k)=\alpha_k$ for $1\leq k\leq b$, and also each $i_k\leq t$.\par 
	
	Take the rows $\alpha_1,\ldots,\alpha_b$ and the columns $i_1,\ldots,i_b$. They form a minor inside $Z_{st}^{(v)}$, which has a $1$ in each of its rows. As there is a $1$ in each of the rows of such a minor, its determinant will have $1$ or $-1$ as one of its terms. It then follows from \hyperref[o.o1]{Observation 1} that such a determinant must be $\pm 1$. Hence $1\in I_{v,w}$.
\end{proof}
\section{Getting rid of the redundant defining minors}\label{s.redminors}
\subsection{A formula for the rank matrix $\tilde{R}_w$}\label{ss.rmatrix}
Let $w \in S_n$ and $q\in \{1,\dots ,n\}$. For each $q$, consider the sets:
	\begin{align*}
		A_q^{(q)}=\{w(i)|1\leq i \leq q\} \quad &and \quad  a_q^{(q)} = min A_q^{(q)}. \\
		A_{q-1}^{(q)}=A_q^{(q)}-\{ a_q^{(q)}\} \quad &and \quad  a_{q-1}^{(q)} = min A_{q-1}^{(q)}. \\
		\vdots\\
		\textnormal{In general, }A_i^{(q)}=A_{i+1}^{(q)}-\{ a_{i+1}^{(q)}\} \quad &and \quad  a_i^{(q)} = min A_i^{(q)} \ for \ 1\leq i\leq q-1.\\
	\end{align*}
	Let $p,q\in\{1,\ldots,n\}$ be arbitrary. Then, we can redefine the $(p,q)$-th entry $\tilde{r}_{pq}^{(w)}$ of $\Tilde{R}_w$ as follows :
	\[
	\tilde{r}_{pq}^{(w)}=\begin{cases}
		q & \textnormal{for}\quad 1\leq p \leq a_{q}^{(q)},\\
		i-1 & \textnormal{for}\quad a_{i}^{(q)} < p \leq a_{i-1}^{(q)} \textit{ where $2\leq i \leq q$},\\
		0 & \textnormal{for}\quad p > a_{1}^{(q)}.
	\end{cases}
	\]
\begin{example}\label{e.discardingminors}
	Let $v,w \in S_5 $ such that $v=23451$ and $w=42531$. Correspondingly, \[
	Z_v=\begin{bmatrix}
		0 &0 &0 &0 &1\\
		1 &0 &0 &0 &0\\
		z_{31} &1 &0 &0 &0\\
		z_{21} &z_{22} &1 &0 &0\\
		z_{11} &z_{12} &z_{13} &1 &0
	\end{bmatrix}\quad \textnormal{and} \quad \tilde{R}_w=\begin{bmatrix}
		1 &2 &3&4 &5\\
		1&2&3&4&4\\
		1&1&2&3&3\\
		1&1&2&2&2\\
		0&0&1&1&1
	\end{bmatrix}
	\]
	For the 3rd column of $\Tilde{R}_w$, we have $q=3$, and
	\begin{align}\notag
		A_3^{(3)}=\{2,4,5\} &\textnormal{ and } a_3^{(3)}=2,\\\notag
		A_2^{(3)}=\{4,5\} &\textnormal{ and } a_2^{(3)}=4,\\\notag
		A_1^{(3)}=\{5\} &\textnormal{ and } a_1^{(3)}=5.\\\notag
	\end{align}
	Now, we have the entries $\tilde{r}_{p3}^{(w)}$ of $\Tilde{R}_w$ as follows :
	\[
	\tilde{r}_{p3}^{(w)}=\begin{cases}
		3 & \textnormal{for}\quad 1\leq p \leq a_{3}^{(3)},\\
		i-1 & \textnormal{for}\quad a_{i}^{(3)} < p \leq a_{i-1}^{(3)} \textit{ where $2\leq i \leq 3$},\\
		0 & \textnormal{for}\quad p > a_{1}^{(3)}.
	\end{cases},\]
	which, using the above calculation, simplifies to :\[
	\tilde{r}_{p3}^{(w)}=\begin{cases}
		3 & \textnormal{for}\quad 1\leq p \leq 2,\\
		2 & \textnormal{for}\quad 2=a_{3}^{(3)} < p \leq a_{2}^{(3)}=4, \ i.e \ , p\in \{3,4\}\\
		1 & \textnormal{for}\quad 4=a_{2}^{(3)} < p \leq a_{1}^{(3)}=5,\ i.e \ ,p=5\\
	\end{cases}
	\]
	One can now see that these $\tilde{r}_{p3}^{(w)}$ as $1\leq p\leq 5$, coincides with the third column of $\tilde{R}_w$.\\
	\end{example}
\subsection{Reducing certain defining minors}\label{ss.redminors}
\begin{remark}\label{r.fulton}
	In \cite{Ful92}, he has worked with the northwest subminor of a matrix similar to $Z^{(v)}$ and there (lemma 3.10 to be specific), he has defined something called ``essential set" and proved that it is sufficient to consider only certain northwest matrices (corresponding to the ``essential set" ) and work with their minors. This approach can also be used with our convention to get a much optimal set of generators for $I_{v,w}$. 
\end{remark}
We can define the ``essential set" in our convention as follows:\\
Fix a $w \in S_n$. Draw a $n\times n$ box where for each $1\times 1$ sub-box, the row and column it is in, be its labeling. Then, for each $i\in \{1,2,\dots,n\}$, consider the box $(i,w(i))$. Put a dot in it and draw a line from the dot in $(i,w(i))$ to its right and top,i,e, draw lines crossing the boxes $\{(i,j)|j>w(i)\}$ and $\{(s,w(i))|s<i\}$ respectively. For each $i$, repeat the process. Then, consider the remaining boxes that don't have a dot or a line through it. Let, $A$ be the set of their labelings. Let $E=\{(i,j)\in A \ |\textit{ neither } (i-1,j)\in A \textit{ nor } (i,j+1)\in A\}$. Then $E$ is the ``essential set" in our convention.
\begin{remark}\label{r.essentialset}
One can prove a lemma similar to lemma 3.10 in \cite{Ful92}, that in order to find the generators of $I_{v,w}$, it suffices to take only the $\tilde{r}_{st}^{(w)}$ minors corresponding to the Southwest $(n-s+1)\times t$ submatrix for each $(s,t)\in E$, where $E$ is the essential set as above (in our convention).
\end{remark}

\section{Some results on homogeneity}\label{s.resonhom}
\textbf{Fact 1:} Any square matrix $[X_{ij}]_{n\times n}$, where each $X_{ij}$ is either $0$ or an indeterminate, has determinant equal to a homogeneous polynomial.\label{fact1}
\begin{proof}
	For a $2\times 2$ matrix, depending on the position of number of zeroes it has, it is either a degree 2 homogeneous polynomial or $0$. Assume [$X_{ij}$] be a $n\times n$ matrix where each $X_{ij}$ is either a zero or an indeterminate. By induction, all the $(n-1)\times (n-1)$ minors can be assumed to be a homogeneous polynomial of degree $(n-1)$ or 0. Taking the determinant of the matrix [$X_{ij}$] expanding via, say, the first row, we get a summation of $X_{1j}$.(a $(n-1)$ minor) for each j$j$. But each term in the summation ,depending on the $X_{1j}$ or the minor, is either 0 or becomes a $n$ degree polynomial and thus yields a degree $n$ polynomial as a whole or 0. Hence we get the result.
\end{proof}
\begin{lemma}\label{l.homthm}
	Consider the polynomial ring $\mathbb{C}[x_1, x_2, \dots x_l]$ for some $l \in \mathbb{N}$. Let $I$ be an ideal in it such that $I=\langle f_1, f_2, \dots, f_n\rangle$, where $m$ of the $f_i$ are inhomogeneous and rest are homogeneous, $m\leq n$. Let $f_i$ be an inhomogeneous polynomial such that $f_i=\sum_{j=1}^{n_i}h_{ij}$, where $h_{ij}$ are the homogeneous components of $f_i$. Then,\\
	$I$ is homogeneous $\implies$ There exists some $h_{ij}\ (1\leq j\leq n_i)$, such that every monomial in that $h_{ij}$ is divisible either by at least one monomial of some $f_k$ ($k\neq i$) or by at least one monomial of some $h_{ik}$ ($k\in\{1,\ldots,n_i\}\setminus\{j\}$).
\end{lemma}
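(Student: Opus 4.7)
The plan is to use homogeneity of $I$ (which places every $h_{ij}$ into $I$) to write each $h_{ij}=\sum_k g_{k,j}f_k$, extract the degree-$d_j$ part of both sides (where $d_j:=\deg h_{ij}$), and show that the constant term $(g_{i,j})_0$ of the multiplier of $f_i$ must be different from $1$ for at least one choice of $j$ and of expression. Once this is done, rearranging will express $h_{ij}$ as a polynomial combination of the $h_{i,\ell}$ with $\ell\ne j$ and of the $h_{k,\ell}$ with $k\ne i$, and the monomial-divisibility in the lemma then follows by observing that any monomial occurring with nonzero coefficient on the left must arise, before cancellation, as a product of a monomial of a coefficient with a monomial of some summand on the right.

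Concretely, expanding $f_k=\sum_\ell h_{k,\ell}$ and taking the degree-$d_j$ component of the identity $h_{ij}=\sum_k g_{k,j}f_k$ gives
\[
\bigl(1-(g_{i,j})_0\bigr)\,h_{ij} \ =\ \sum_{\ell\ne j}(g_{i,j})_{d_j-\deg h_{i,\ell}}\,h_{i,\ell}\ +\ \sum_{\substack{k\ne i\\ \ell}}(g_{k,j})_{d_j-\deg h_{k,\ell}}\,h_{k,\ell},
\]
where $(g)_e$ denotes the degree-$e$ homogeneous part of the polynomial $g$; the only contribution with $k=i,\ell=j$ is $(g_{i,j})_0\,h_{ij}$, and terms with a negative index on $(g)_{\bullet}$ vanish automatically. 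If for some $j$ and some expression one has $(g_{i,j})_0\ne 1$, divide by that nonzero scalar to display $h_{ij}$ in the required form; since the monomials of $h_{k,\ell}$ with $k\ne i$ are exactly monomials of $f_k$, the divisibility condition of the lemma holds for that $j$.

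To rule out the remaining case in which $(g_{i,j})_0=1$ for every $j$ and every expression, fix one such expression $E_j$ for each $j$ and sum over $j$. Combined with $\sum_j h_{ij}=f_i$, this yields a syzygy $\sigma=(\sigma_1,\dots,\sigma_n)$ of $(f_1,\dots,f_n)$ with $\sigma_i=\sum_j g_{i,j}-1$ and $\sigma_k=\sum_j g_{k,j}$ for $k\ne i$, whose $i$-th coordinate has constant term $n_i-1$. Because $f_i$ is inhomogeneous we have $n_i\ge 2$, so $(\sigma_i)_0\ne 0$. Replacing $E_1$ by $E_1+\sigma$ gives another valid expression for $h_{i,1}$ whose multiplier of $f_i$ has constant term $1+(n_i-1)=n_i\ne 1$, contradicting the standing assumption and completing the proof.

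The main obstacle will be spotting that the correct invariant to track is the scalar $(g_{i,j})_0$, and recognizing that the identity $f_i=\sum_j h_{ij}$ itself furnishes a syzygy with nonzero $(\sigma_i)_0$ whenever $f_i$ is genuinely inhomogeneous; once this is in place, the degree bookkeeping above and the passage from a polynomial identity to a statement about individual monomials are routine.
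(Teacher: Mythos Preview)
Your argument is correct and reaches the same conclusion as the paper, but by a noticeably cleaner route. The paper fixes one expression $h_{11}=\sum_k g_k f_k$ and performs a four-case analysis on the shape of $g_1$ (constant $\neq 1$; equal to $1$; no constant term; nonconstant with nonzero constant term), in each case either isolating $h_{11}$ on the left or, when the constant term equals $1$, switching attention to a different component $h_{1k}$ and redoing the monomial-divisibility argument there. You instead isolate the single invariant $(g_{i,j})_0$, observe that whenever it differs from $1$ the degree-$d_j$ extraction immediately yields the desired divisibility, and then dispose of the bad case uniformly: summing the chosen expressions over all $j$ and subtracting the tautology $f_i=\sum_j h_{ij}$ produces a syzygy whose $i$-th coordinate has constant term $n_i-1\neq 0$, and adding this syzygy to any one expression forces the constant term to $n_i\neq 1$. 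This replaces the paper's ad hoc case split with a short structural argument; conversely, the paper's approach avoids invoking syzygies and stays entirely at the level of a single expression, at the cost of more bookkeeping.
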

\begin{proof}
	We may assume $f_i$ to be $f_1$ by relabeling if necessary. Let $I$ be homogeneous. Then each $h_{1j} \in I$, in particular $h_{11}\in I$. Say, $h_{11}=\sum_{i=1}^{n}g_if_i$ for some polynomials $g_i \in \mathbb{C}[x_1, x_2, \dots x_l] $.\\
	\textbf{Case (1):} $g_1$ is a constant polynomial $(\neq 1)$. Say, $g_1 = c (\neq 1)$. \\
	Then we have:\\
	$h_{11}=\sum_{i=2}^{n}g_if_i + c(h_{11}+\dots+ h_{1n_1})$\\
	$\implies h_{11}= \sum_{i=2}^{n}g_i'{f_i} + c'(h_{12}+\dots+ h_{1n_1})$ \quad \quad \quad (1)\\
	where $g_i'=\frac{g_i}{1-c}$ and $c'=\frac{c}{1-c}$.\\
	\textbf{Subcase (a):} $c'=0$.\\
	Then $h_{11}$= $\sum_{i=2}^{n}g_i'{f_i} $.\quad \quad \quad $(*)$\\
	Let $X_{11}$ be an arbitrary term of $h_{11}$. Then equation $(*)$ implies that $X_{11}$ is a sum of some terms of the form $m_{ab}X_{ac}$, where $m_{ab}$ is a term in some $g_a$ ($a\neq 1$), and $X_{ac}$ is a term in some $f_a$ ($a\neq 1$).\\
	Now, $X_{11}$ is a monomial, which implies that at least one term of the form $X_{ac}$ ($a\neq 1$) must divide $X_{11}$. This proves the lemma in this subcase.\\
	\textbf{Subcase (b):} $c'\neq$ 0.\\
	Then, for equation $(1)$ to hold, the terms of $c'(h_{12}+\ldots+h_{1n_1})$ must get canceled with some terms from $\sum_{i=2}^{n}g_i'{f_i} $. Let $X_{1j}$ be one arbitrary term from the part $c'(h_{12}+\dots+ h_{1n_1})$. Then we must have that:
	$$-X_{1j}=\mbox{a sum of some terms of the form}\ m_{ab}X_{ac},$$
	where $a\neq 1$, and $m_{ab},X_{ac}$ are as in Subcase (a) of Case 1 above. \\
	Since $X_{1j}$ is a monomial, therefore the above equation implies that at least one term of the form $X_{ac}$ ($a\neq 1$) divides $X_{1j}$. This proves the lemma in this subcase also.\\
	\textbf{Case (2):} $g_1 = 1$.\\
	Then, we have $\sum_{i=2}^{n}g_if_i + (h_{12}+\dots+ h_{1n_1})=0$. \\
	This implies that
	$$h_{12}=-\sum_{i=2}^{n}g_if_i+(h_{13}+\cdots+h_{1n_1}).$$
	Since $h_{13},\ldots,h_{1n_1}$ are distinct from $h_{12}$, they cannot contribute to $h_{12}$. Hence the terms of $h_{12}$ come from the portion $-\sum_{i=2}^{n}g_if_i$ only. Let $X_{12}$ be an arbitrary term of $h_{12}$. Then by a similar argument as in Case (1), we are done.\\
	\textbf{Case (3):} $g_1$ doesn't have any constant term. \\
	We have 
	$$h_{11}=\sum_{i=2}^{n}g_if_i+g_1(h_{11}+\cdots+h_{1n_1}).$$
	Notice that $g_1h_{11}$ cannot give a term of $h_{11}$ (as $g_1$ doesn't have any constant term). Therefore, we must have that: $h_{11}$ is obtained from the part
	$$\sum_{i=2}^{n}g_if_i+g_1(h_{12}+\cdots+h_{1n_1}).$$
	Let $X_{11}$ be an arbitrary term of $h_{11}$. Then
	$$X_{11}=\mbox{a sum of some terms of the form}\ m_{ab}X_{ac}\ \mbox{where}\ a\neq 1$$
	$$\mbox{and some terms of the form}\ m_{1r}X_{1j},$$
	where $m_{1r}$ is a monomial in $g_1$, $X_{1j}$ is a monomial from the portion $h_{12}+\cdots+h_{1n_1}$, and $m_{ab},X_{ac}$ are as in Case (1).\\
	Now, $X_{11}$ is a monomial. So we must have that at least one of the $X_{ac}$ ($a\neq 1$) or $X_{1j}$ ($j\neq 1$) divides $X_{11}$. Thus, we are done in this case.\\
	\textbf{Case (4):} $g_1$ contains a non zero constant term (say c).\\
	We had
	$$h_{11}=\sum_{i=2}^{n}g_if_i+c(h_{11}+\cdots+h_{1n_1})+$$
	$$\mbox{a sum of some non constant terms multiplied to the}\ h_{1j}\mbox{'s}.$$
	\textbf{Subcase (a):} $c=1$.\\
	Then for some $k(\neq 1)$, 
	$$-h_{1k}=\sum_{i=2}^{n}g_if_i+(h_{12}+\cdots+h_{1,k-1}+h_{1,k+1}+\cdots+h_{1n_1})$$
	$$+\mbox{a sum of some non constant monomials multiplied to the}\ h_{1j}\mbox{'s}.$$
	Since a non constant monomial multiplied to $h_{1k}$ cannot give any term of $h_{1k}$ and all the other $h_{1j}$ for $j$ such that $2\leq j\leq n_1,j\neq k$ have different degrees, therefore $-h_{1k}$ is obtained from the part 
	$$\sum_{i=2}^{n}g_if_i+(g_1-1)(h_{11}+h_{12}+\cdots+h_{1,k-1}+h_{1,k+1}+\cdots+h_{1n_1}).$$
	Let $X_{1k}$ be an arbitrary term of $-h_{1k}$. Then the above argument implies the lemma in this subcase.\\
	\textbf{Subcase (b):} $c\neq 1$.\\
	Then
	$$h_{11}=\sum_{i=2}^{n}g_i'f_i+c'(h_{12}+\cdots+h_{1n_1})$$
	$$+\mbox{a sum of some non constant monomials multiplied to the}\ h_{1j}\mbox{'s},$$
	where $g_i'=\frac{g_i}{1-c}$ and $c'=\frac{c}{1-c}$.\\
	Then $h_{11}$ is obtained from the part 
	$$\sum_{i=2}^{n}g_i'f_i+\mbox{a sum of some non constant monomials multiplied to the}\ h_{1j}\mbox{'s}\ (j\neq 1).$$
	This proves the lemma in this subcase.
\end{proof}
\begin{example}\label{e.homthm}
	Let I=$\langle f_1,f_2,f_3\rangle$ be an ideal in $\mathbb{C}[x_1,x_2,x_3,x_4]$ where $f_1=x_1x_2,f_2=x_2x_3$ be homogeneous polynomials and $f_3=x_1x_3+x_2x_3x_4$ be a inhomogeneous polynomial. Then clearly (by inspection) one can observe that $I=\langle f_1,f_2,x_1x_3\rangle$, hence, $I$ is homogeneous and one of the components of the inhomogeneous part $f_3$, namely $x_2x_3x_4$, is divisible by one of the $f_i \in \{f_1,f_2\}$, namely, by $f_2=x_2x_3$.
\end{example}
\section{Determinants and Paths}\label{s.paths}
\subsection{Paths}\label{ss.paths}
Let X=[$x_{ij}$] be a square matrix of size $q \in \mathbb{N}$.\\
A \textbf{path in X} is a finite sequence of points $\{x_{i_1j_1},\dots ,x_{i_qj_q}\}$ such that \\
(1) $j_k=k \ \forall \ k\in \{1,\dots ,q\}.$\\
(2) $\{i_1,\dots , i_q\}$ are all distinct.\\
\begin{remark}\label{r.pathremark1}
	We can draw a path in X as a finite sequence of line segments joining the consecutive points in the set $\{x_{i_1j_1},\dots ,x_{i_qj_q}\}$.
\end{remark}
\begin{remark}\label{r.pathremark2}
	A path in X has the property that no two points in it can be joined by a vertical or horizontal line segment.
\end{remark}
\begin{defi}\label{d.pathdef}
	A \textbf{non zero path} in X is a path in X which doesn't contain any zero element. 
\end{defi}
\begin{defi}\label{d.subpathdef}
	A \textbf{subpath} of a path in X is a subcollection of points of the path in X. 
\end{defi}
\begin{example}\label{e.path}
	Let, \[A=\begin{bmatrix}
		a &b &c\\
		d &e &f\\
		g &h &i
	\end{bmatrix}
	\]
	Then , \textbf{aec} is a path in A, so is \textbf{gbf}, but \textbf{abc} isn't. Also, \textbf{g} or \textbf{gb} are subpaths of \textbf{gbf}. One can also see that detA is a sum of all of its possible paths with a plus or minus sign upfront.
\end{example}
\textbf{Observation 1:}  Let $A$ be any $p \times p$ ($p \geq 2$) minor of $Z^{(v)}$  with rows $\{i_1 < \dots < i_p\}$ and columns $\{j_1 < \dots < j_p\}$ in relation to $Z^{(v)}$ (rows counted from bottom to top and columns counted from left to right). If $A$ is non singular, then $detA$ is either $\pm 1$, or a sum of non constant monomials.\label{o.o1}
\begin{proof}
	Let $A$ be non singular such that $detA$ is not a sum of non constant monomials. That is, Suppose $detA$ is a polynomial with a constant term. Then, by definition of $Z^{(v)}$, that constant term must be $1$ or $-1$ and this means, in $A$, there is an 1 in each column. Let $\alpha$ be a non zero path in $A$ and the 1 in the $1^{st}$ column of $A$ be in row $i_{\alpha_1}$. Since all the entries to the right of 1 is zero and it being the first column in $A$, the non zero entry that $\alpha$ has to pick from the $i_{\alpha_1}^{th}$ row, must be 1. Let the 1 in 2nd column be at row $i_{\alpha_2}$. Clearly $\alpha_1 \neq \alpha_2$. Again, since all the entries to the right of this 1 (if any) are 0, and it cannot pick anything from left of 1 as it has already picked its entry (i.e 1) from the first column, $\alpha$ must pick 1 from the $i_{\alpha_2}^{th}$ row. Iterating the same argument for each column, we see that $\alpha$ must pick 1 from each column and thus $\alpha=\pm 1$ is the only possible path,i.e, $detA=\pm 1$. This completes the proof.
\end{proof}
\textbf{Observation 2:} Let $A$ be any $p \times p$ ($p \geq 2$) minor of $Z^{(v)}$  with rows $\{i_1 < \dots < i_p\}$ and columns $\{j_1 < \dots < j_p\}$ (rows counted from bottom to top and columns counted from left to right). Let $f$ and $g$ be any two non constant monomials in $det A$. Then neither of them divides the other one.\label{o.o2}
\begin{proof}
	Let $g$ divide $f$ ($g$ $\neq$ $f$), both non constant. $f$ be a path of the form : $\Pi z_{i_{\alpha_1}j_1}\cdot \dots \cdot z_{i_{\alpha_p}j_p}$ where $z_{i_{\alpha_k}j_k}$ (for $1\leq k\leq p$) is either an indeterminate or $1$.
	Let $j_s (1\leq s \leq p-1$) be the maximal index such that the variables of $g$ equals with variables of $f$ upto that index, and so, $(s+1)^{th}$ variable of $g$ doesn't equal the $(s+1)^{th}$ variable of $f$. So, say, $g$ is of the form $\Pi z_{i_{\alpha_1}j_1}\cdot \dots \cdot z_{i_{\alpha_s}j_s}***$   where $*$ stands for the later variables of $g$ corresponding to the columns $j_{s+1}$ to $j_p$. Since $g$ is also a nonzero path, so it should have picked a non zero entry from the $j_{s+1}$-th column, and as $g$ (a nonzero path) is supposed to divide the path $g$, it must be that the entry that $g$ has picked from the $j_{s+1}$-th column is $1$. Say, it belongs to the $i_o^{th}$ row. But then by the nature of $Z^{(v)}$, every entry in the $i_o^{th}$ row, to the right of $1$ (if any) must be $0$. Now, $f$ being a nonzero path, must have picked a nonzero entry from the $i_o^{th}$ row and that entry must have come from a column left of $1$ i.e from a column from $j_1$ to $j_s$ (Note that the path $f$ has not picked the entry $1$ from the $i_0$-th row, because of the definition of $j_s$). But this is impossible as $g$ agrees with $f$ in all those columns and this would then mean $g$ has picked two entries from the $i_o^{th}$ row. \\
	Since all the steps above are consistent with one another, it must be that our starting assumption was wrong. So, $\nexists$ any such  non constant $g$ that divides $f$.
\end{proof}
\begin{remark}\label{r.lemma-homthm}
It follows from \hyperref[o.o2]{Observation 2} above that the conclusion:
\begin{quote}
$I$ is homogeneous $\implies$ There exists some $h_{ij}\ (1\leq j\leq n_i)$, such that every monomial in that $h_{ij}$ is divisible either by at least one monomial of some $f_k$ ($k\neq i$) or by at least one monomial of some $h_{ik}$ ($k\in\{1,\ldots,n_i\}\setminus\{j\}$).
\end{quote}
in Lemma \ref{l.homthm} can be replaced by the following:
\begin{quote}
$I$ is homogeneous $\implies$ There exists some $h_{ij}\ (1\leq j\leq n_i)$, such that every monomial in that $h_{ij}$ is divisible by at least one monomial of some $f_k$ ($k\neq i$). 
\end{quote}
in case $I$ is a KL-ideal.
\end{remark}
Let us now have some results on division of terms.
\begin{theorem}\label{t.poldiv1}
	Let $A$ and $B$ be 2 minors in $Z^{(v)}$ of size $p$ and $q$ respectively. Firstly, suppose $p\le q$ and $A$ is not a subminor of $B$. Let a monomial in $det A$ be called $m_A$ and similarly $m_B$ relative to $detB$. Let A and B have the rows $i_1\dots,i_m$ and columns $j_1,\dots,j_n$ in common. Then $\exists \ m_A$ such that $m_A|m_B \iff$ all the rows and columns of $A$ that aren't equal to $i_s$ for some $1\leq s \leq m$ or $j_k$ for some $1\leq k \leq n$, are of the form $(*,*,\dots,1,0,\dots,0)$ and $(0,\dots,0,1,*,\dots,*)^t$ respectively.
\end{theorem}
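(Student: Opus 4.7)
The plan is to interpret monomials in $\det A$ (resp.\ $\det B$) as non-zero paths as in \S\ref{ss.paths}, writing $m_A = \prod_{j \in J_A} z_{\sigma(j), j}$ for some bijection $\sigma : J_A \to I_A$, and similarly $m_B$ via $\tau : J_B \to I_B$; here $I_A, J_A$ and $I_B, J_B$ denote the row/column index sets of $A$ and $B$. In this language the two directions of the equivalence become concrete statements about which entries $\sigma$ is forced to select.

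Forward direction: Suppose $m_A \mid m_B$ and pick a unique column $j' \in J_A \setminus J_B$ of $A$. The factor $z_{\sigma(j'), j'}$ of $m_A$ is nonzero (else $m_A = 0$); if it were a genuine indeterminate, then $m_A \mid m_B$ would force it to appear among the factors of $m_B$, but every factor of $m_B$ has the form $z_{\tau(j''), j''}$ with $j'' \in J_B$, and $j' \notin J_B$, a contradiction. Hence $z_{\sigma(j'), j'} = 1$, i.e.\ column $j'$ of $A$ has a $1$ at row $\sigma(j') \in I_A$. The defining rules of $Z^{(v)}$ then force all entries above this $1$ in the column to vanish, so column $j'$ of $A$ has the form $(0, \ldots, 0, 1, *, \ldots, *)^t$. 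A symmetric argument applied to a unique row $i' \in I_A \setminus I_B$, using that entries to the right of the $1$ in any row of $Z^{(v)}$ are zero, yields the form $(*, \ldots, 1, 0, \ldots, 0)$.

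Reverse direction: Assuming the prescribed form, let $\rho(j') \in I_A$ and $\gamma(i') \in J_A$ denote the row (resp.\ column) of the guaranteed $1$ in each unique column $j'$ (resp.\ unique row $i'$). Build $\sigma$ in two stages. First, force $\sigma(j') = \rho(j')$ on every unique column $j'$, and $\sigma(\gamma(i')) = i'$ on every unique row $i'$ whose $\gamma(i')$ lies in $J_A \cap J_B$. Since each row and each column of $Z^{(v)}$ carries exactly one $1$, these forced assignments are pairwise compatible, and a short enumeration shows that the residual shared columns and residual shared rows have the same cardinality. Second, extend $\sigma$ to a bijection by picking a nonzero permutation on the residual shared-row, shared-column submatrix; this yields a nonzero $m_A$ whose indeterminate factors all sit at (shared row, shared column) positions. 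Since those positions also lie in $B$, extend $\tau$ to agree with $\sigma$ on them and fill in the remaining columns of $B$ with nonzero entries, producing a nonzero $m_B$ with $m_A \mid m_B$.

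The main obstacle is the extension step in the reverse direction: one must verify that the residual shared submatrix of $A$ (and the analogous residual portion of $B$) admits a nonzero permutation. Here the precise $0$-pattern of $Z^{(v)}$, dictated by $v$ and confined to highly structured triangular-type blocks, has to be exploited, likely via an inductive peeling argument on the forced constraints, or by invoking \hyperref[o.o1]{Observation 1} to analyze the residual submatrix's determinant directly.
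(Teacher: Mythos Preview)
Your forward direction is correct and essentially identical to the paper's: a variable factor of $m_A$ sitting in a unique row or column of $A$ cannot appear in $m_B$, so that factor must be a $1$, and the structure of $Z^{(v)}$ then forces the stated row/column shape.

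The reverse direction, however, is where you diverge from the paper and where your acknowledged gap lies. You read the statement as ``there exist $m_A$ and $m_B$ with $m_A\mid m_B$'' and so attempt to manufacture both paths from scratch, which forces you into the residual-submatrix existence problem you flag at the end. The paper instead treats $m_B$ as \emph{given} (which is the reading consistent with how the theorem is applied via Lemma~\ref{l.homthm} and Remark~\ref{r.lemma-homthm}) and constructs $m_A$ directly from it: one simply looks at the factors $z_{i_\alpha j_\beta}$ of $m_B$ whose row index $i_\alpha$ \emph{and} column index $j_\beta$ are both shared with $A$, and takes these as the non-trivial factors of $m_A$; the remaining rows and columns of $A$ (both the unique ones, by hypothesis, and the shared ones not hit by these factors, because $m_B$ already selected $1$'s there) are filled with $1$'s. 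This route completely bypasses your residual-submatrix issue, because the ``residual'' portion of $m_A$ is determined by what $m_B$ already picked rather than needing an independent nonzero transversal.

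So the concrete fix is: do not build $\sigma$ abstractly and then try to extend $\tau$; instead start from the given $m_B=\prod z_{\tau(j),j}$, set $m_A$ to be the product of those $z_{\tau(j),j}$ with $\tau(j)\in I_A\cap I_B$ and $j\in J_A\cap J_B$, and verify that this extends to a full path in $A$ using $1$'s. The peeling/inductive argument you propose is unnecessary.
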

\begin{proof}
	$(\implies)$ Suppose $\exists \ m_A$ such that $m_A|m_B$. Then, the path $m_A$ has picked 1 from all the rows and columns that don't appear in the expression of $m_A$. Such rows and columns exist as $A$ is not a sub-minor of $B$(as then $\exists$ at least one row and column that is not a row or column in $B$ and $m_A$ being a path, that row and column has some contribution in $m_A$). Therefore, by definition of $Z^{(v)}$,those rows and columns are of the mentioned forms.\\
	($\impliedby$) Let $m_B$ be of the form : $\Pi * \dots * z_{i_{\alpha}j_{\beta}}*\dots*$ where $z_{i_{\alpha}j_{\beta}}$ denote the variables corresponding to the intersecting rows and columns and $*$ denote the variables for the non intersecting rows and columns. If all the $z_{i_{\alpha}j_{\beta}}=1$, then, by using our hypothesis, we can form a path $m_A=1$ and clearly $m_A|m_B$.\\
	Else, say not all $z_{i_{\alpha}j_{\beta}}=1$ and let, $\alpha_1,\dots, \alpha_t$ and $\beta_1,\dots,\beta_{t^{'}}$ be the indices of these $z_{i_{\alpha}j_{\beta}}$. Then, it must be that $z_{i_{\alpha}j_{\beta}}=1$ for all $\alpha\in \{1,\dots,m\}\setminus \{\alpha_1,\dots, \alpha_t\},\beta\in \{1,\dots,n\}\setminus \{\beta_1,\dots,\beta_{t^{'}}\}$ and as these are common to $A$ also, we can pick 1 from these rows and columns too. This together with our assumption, helps us to form the path $m_A=\prod\limits_{\substack{\alpha\in\{\alpha_1,\dots, \alpha_t\} \\ \beta\in\{\beta_1,\dots,\beta_{t^{'}}\}}}^{}z_{i_{\alpha}j_{\beta}}$ and clearly $m_A|m_B$.
	\end{proof}
	
	\begin{theorem}\label{t.poldiv2}
		Assume the same notations as Theorem \ref{t.poldiv1} and that A is a sub-minor of B. Then $\exists m_A$ such that $m_A|m_B$ $\iff$ $\exists$ a path in A that is a sub-path of $m_B$.
	\end{theorem}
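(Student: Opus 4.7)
The plan is to prove the two implications separately; the $(\Leftarrow)$ direction is essentially bookkeeping, while $(\Rightarrow)$ carries the real content.

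For $(\Leftarrow)$, suppose $\alpha$ is a path in $A$ whose underlying point-set is contained in that of $m_B$. Set $m_A := \prod_{(r,c)\in\alpha} z_{r,c}$; by the definition of a path this is (up to sign) a monomial of $\det A$. Because every factor of $m_A$ is literally an entry already picked up by $m_B$, we get $m_A \mid m_B$, so the required $m_A$ exists.

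For $(\Rightarrow)$, given $m_A\in\det A$ with $m_A\mid m_B$, the natural candidate sub-path is the path $\alpha_A$ of $m_A$ itself in $A$. Writing $\sigma_A : J_A \to I_A$ for the permutation producing $m_A$ and $\sigma_B : J_B \to I_B$ for the one producing $m_B$, I would split $J_A$ into $J_0$, the columns $j$ where $(\sigma_A(j),j)$ carries a $1$, and $J_A\setminus J_0$, where it carries an indeterminate. For $j\in J_A\setminus J_0$ the indeterminate $z_{\sigma_A(j),j}$ is a factor of $m_A$ and hence of $m_B$, and since each indeterminate of $\mathbb{C}[\mathbf{z}^{(v)}]$ sits at a unique position of $Z^{(v)}$, this forces $\sigma_B(j)=\sigma_A(j)$, so the point $(\sigma_A(j),j)$ already lies in the point-set of $m_B$.

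The principal obstacle is the $1$-columns $j\in J_0\cap J_A$, for which divisibility places no direct constraint on $\sigma_B(j)$; a priori $m_B$ could pick an indeterminate strictly below the $1$ at $(\sigma_A(j),j)$. To control these I would exploit the zero-pattern of $Z^{(v)}$ recalled in \S\ref{ss.defminors}: every $1$ in $Z^{(v)}$ is flanked by zeros to its right in the same row and strictly above it in the same column. These constraints force any $j'\in J_B$ with $\sigma_B(j')=\sigma_A(j)$ to satisfy $j'<j$ and to carry an indeterminate in row $\sigma_A(j)$; iterating this produces a strictly decreasing chain of columns in $J_0\cap J_A$ (or eventually in $J_B\setminus J_A$) along which the associated $1$-rows strictly increase inside $I_A$. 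The hypothesis that $A$ is a sub-minor of $B$ is crucial here, since it allows me to bound the chain inside $I_A \times J_A$; the hardest step, which I expect will occupy most of the work, is showing that the chain cannot escape $J_A$ in a manner that strands some $1$-entry of $\alpha_A$ outside $m_B$. Once this is established, every position of $\alpha_A$ lies in $m_B$'s point-set and $\alpha_A$ itself is the required sub-path, with the degenerate case $\det A=\pm 1$ (flagged by Observation~\ref{o.o1}) treated by an analogous but simpler bookkeeping on the all-$1$'s path of $A$.
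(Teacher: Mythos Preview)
Your $(\Leftarrow)$ direction is fine and matches the paper. For $(\Rightarrow)$ there is a genuine gap: the step you flag as ``the hardest'' --- showing that the chain cannot escape $J_A$ so as to strand a $1$-entry of $\alpha_A$ outside $m_B$ --- cannot be completed, because the implication is false as stated.

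Take $n=5$ and $v=21354$, so that
\[
Z^{(v)}=\begin{pmatrix}0&1&0&0&0\\1&0&0&0&0\\z_{31}&z_{32}&1&0&0\\z_{21}&z_{22}&z_{23}&0&1\\z_{11}&z_{12}&z_{13}&1&0\end{pmatrix}.
\]
Let $B$ be the minor on rows $\{1,2,3\}$ and columns $\{1,2,3\}$, and $A$ its sub-minor on rows $\{1,3\}$ and columns $\{1,3\}$. Then $\det A=z_{31}z_{13}-z_{11}$, and $m_A=z_{11}$ arises from the path $\{(1,1),(3,3)\}$. The term $m_B=z_{11}z_{32}z_{23}$ of $\det B$ arises from the path $\{(1,1),(3,2),(2,3)\}$. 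Clearly $m_A\mid m_B$, yet neither of the two paths of $A$ is a sub-path of $m_B$: restricting $\sigma_B$ to the columns $\{1,3\}$ lands in rows $\{1,2\}$, and $2\notin I_A$. In your language, the chain starting at the $1$-column $j=3$ jumps immediately to $j'=\sigma_B^{-1}(3)=2\in J_B\setminus J_A$, exactly the escape you hoped to rule out.

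The paper's own proof of $(\Rightarrow)$ is the single assertion that ``$m_A$ is a path in $A$ that is a sub-path of $m_B$,'' which is precisely the claim you set out to justify more carefully; the example above shows that this bare assertion is incorrect as well. So your instinct that the $1$-columns carry real content is right --- the paper glosses over them --- but the forward implication does not hold, and no argument along these lines can close the gap.
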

	\begin{proof}
		$\exists m_A$ such that $m_A|m_B \implies m_A$ is a path in $A$ that is a sub-path of $m_B$.\par 
		
		On the other hand, if $\exists $ a path in $A$ that is a sub-path of $m_B$, take the path in $A$ and call it $m_A$. Clearly, $m_A|m_B$.
	\end{proof}
	\begin{theorem}\label{t.poldiv3}
		Assume the same as above except $p>q$. Then we notice that A cannot be a sub-minor of B. Keeping all the other notations same, we have then $\exists \ m_A$ such that $m_A|m_B \iff$ all the rows and columns of $A$ that aren't equal to $i_s$ for some $1\leq s \leq m$ or $j_k$ for some $1\leq k \leq n$, are of the form $(*,*,\dots,1,0,\dots,0)$ and $(0,\dots,0,1,*,\dots,*)^t$ respectively.
	\end{theorem}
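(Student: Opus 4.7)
The plan is to notice that the only essential hypothesis used in the proof of Theorem~\ref{t.poldiv1} was ``$A$ is not a sub-minor of $B$'', and that hypothesis is automatic here: a sub-minor of $B$ has at most $q$ rows and $q$ columns, whereas $A$ has $p>q$ rows, so $A$ cannot be contained in $B$. Thus the proof of Theorem~\ref{t.poldiv3} will proceed by essentially replaying the argument of Theorem~\ref{t.poldiv1}, with the size relation $p>q$ replacing $p\le q$ throughout and the ``not a sub-minor'' clause coming for free.

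For the forward direction, I would begin with a monomial $m_A$ of $\det A$ with $m_A\mid m_B$. Since at most $m\le q<p$ rows and at most $n\le q<p$ columns of $A$ are shared with $B$, the path $m_A$ must traverse rows and columns of $A$ that are \emph{not} rows or columns of $B$. Any factor of $m_A$ coming from such an ``extra'' row or column is either $1$ or a variable $z_{ij}$ that cannot appear in any monomial of $\det B$ (since that row or column is absent from $B$). For divisibility to hold, each such factor must equal $1$. Invoking the structure of $Z^{(v)}$ (an entry $1$ in a row forces all entries to its right to vanish, and symmetrically for columns), the extra rows and columns of $A$ must have the shapes $(*,\dots,*,1,0,\dots,0)$ and $(0,\dots,0,1,*,\dots,*)^{t}$ respectively.

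For the converse, I would assume the stated row/column shapes and construct an explicit $m_A$ dividing $m_B$. On the common block of rows $i_1,\dots,i_m$ and columns $j_1,\dots,j_n$, I would let $m_A$ copy the choices of $m_B$; on the extra rows and columns of $A$, I would force $m_A$ to pick the $1$ guaranteed by the assumed shapes. Then $m_A$ is a product of exactly the common-block factors of $m_B$, hence clearly divides $m_B$. Consistency of this construction (no row or column of $A$ gets used twice) follows because, by the definition of $Z^{(v)}$, the $1$ in a given row of $Z^{(v)}$ occupies a unique column (and symmetrically for columns), so the $1$'s chosen from distinct extra rows lie in distinct columns of $A$, and by the row/column shape hypothesis these columns lie outside the common columns.

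The only delicate point, and the one I would expect to dwell on, is verifying that the $1$'s in the extra rows and columns can be simultaneously selected without collision, i.e.\ that the prescribed $1$-picks actually assemble into a \emph{legitimate path} of $A$ rather than only a partial assignment. This is essentially bookkeeping with the unique-$1$-per-row-and-column structure of $Z^{(v)}$ together with the fact that the common positions are the only ones left to match against $m_B$, so no genuine combinatorial obstacle should arise; the argument is parallel to the $(\Longleftarrow)$ part of Theorem~\ref{t.poldiv1}.
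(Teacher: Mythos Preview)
Your proposal is correct and follows essentially the same approach as the paper: the paper's own proof simply notes that the argument of Theorem~\ref{t.poldiv1} never used the assumption $p\le q$, and that $p>q$ automatically rules out $A$ being a sub-minor of $B$, so the proof of Theorem~\ref{t.poldiv1} carries over verbatim. Your write-up even supplies more detail on both directions than the paper does, but the underlying idea is identical.
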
 
	\begin{proof}
		The proof is same as Theorem \ref{t.poldiv1} as we never used the part $p\leq q $ and as $p>q$, we don't have the case of $A$ being a sub-minor of $B$ and the rest of the proof for Theorem \ref{t.poldiv1} follows.
	\end{proof}
Let us now go back to paths and nonsingularity of a determinant in $Z^{(v)}$.
\begin{lemma}\label{l.zeroRorC}
	Let A be any $p \times p$ ($p \geq 2$) minor of $Z^{(v)}$. Then, A is non singular if and only if $\exists$ at least one non zero path in A. 
\end{lemma}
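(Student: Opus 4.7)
The plan is to apply the Leibniz expansion
\[
\det A = \sum_{\sigma \in S_p} \operatorname{sgn}(\sigma)\prod_{k=1}^{p} a_{k,\sigma(k)},
\]
noting that permutations $\sigma \in S_p$ are in bijection with paths in $A$: the path corresponding to $\sigma$ picks the entry in row $\sigma(k)$ of column $k$ (for $k=1,\dots,p$). A path that passes through a zero entry contributes $0$ to this sum, so $\det A$ equals the signed sum $\sum_{\alpha} \operatorname{sgn}(\alpha)\, m_\alpha$ taken only over non zero paths $\alpha$, where $m_\alpha$ denotes the product of entries of $A$ along $\alpha$.

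The forward implication is then immediate: if $A$ is non-singular, then $\det A \neq 0$, so at least one term $\operatorname{sgn}(\alpha)\,m_\alpha$ of the expansion above must be non-zero, which forces the existence of a non zero path $\alpha$.

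For the converse, the key claim I would establish is that distinct non zero paths in $A$ produce distinct monomials, so no cancellation can occur in the signed sum. This rests on the structure of $Z^{(v)}$: every non-zero entry is either a distinct indeterminate $z_{ij}$, occupying a single position, or a $1$, and within any column of $Z^{(v)}$ (hence of $A$) there is at most one such $1$, located at row $n - v(j) + 1$ of column $j$. Consequently, if two non zero paths $\alpha \neq \beta$ yielded $m_\alpha = m_\beta$, then column by column both paths would be forced to pick the same entry: each indeterminate factor of the common monomial pins down the unique row from which its column contributed (since that indeterminate lives at a single position of its column), and each factor of $1$ pins down the unique row of its column carrying the $1$. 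This contradicts $\alpha \neq \beta$.

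Given the key claim, the contributions of non zero paths to $\det A$ are $\mathbb{C}$-linearly independent monomials, each appearing with coefficient $\pm 1$. Hence the existence of a single non zero path $\alpha$ forces $m_\alpha$ to survive in $\det A$ with non-zero coefficient, so $\det A \neq 0$. The main obstacle will be carrying out the per-column uniqueness argument cleanly, since it is the only non-formal step; once in hand, both directions follow at once. As a sanity check, the degenerate case $m_\alpha = 1$ is already covered by \hyperref[o.o1]{Observation 1}, which independently forces $\det A = \pm 1$ whenever some non zero path consists entirely of $1$'s.
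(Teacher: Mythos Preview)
Your proposal is correct and takes essentially the same approach as the paper. Both argue the nontrivial direction by showing that two distinct nonzero paths in $A$ cannot yield the same monomial---because any column of $Z^{(v)}$ contains at most one entry equal to $1$ and the remaining nonzero entries are distinct indeterminates---so no cancellation occurs in the Leibniz expansion; the paper phrases this as ``pick a column where the paths disagree and observe the two entries picked there are different,'' while you phrase it as ``the monomial uniquely determines the path column by column,'' which is the same observation read contrapositively.
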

\begin{proof}
	$\implies$ obvious.\\
	$\impliedby$ We will show that any two arbitrary and distinct nonzero paths $\alpha$ and $\beta$ in $A$ do not cancel with each other.\par 
	Since $\alpha$ and $\beta$ are distinct, they must disagree on their entries in some column, say column $j$. Let us say that from the column $j$, $\alpha$ picks $z_{ij}$ and $\beta$ picks $z_{i'j}$ (both $z_{ij}$ and $z_{i'j}$ being two nonzero entries of $A$). Now, from the definition of the $Z^{(v)}$ matrix, we have that $z_{ij}\neq z_{i'j}$ (and both cannot be $1$ at the same time). Hence $\alpha$ cannot cancel out with $\beta$. Since $\alpha$ and $\beta$ were arbitrary and distinct, it follows that no two such nonzero paths can cancel out with each other. Hence $A$ is nonsingular.  
\end{proof}
Now comes one of the main theorems of this section, which provides a necessary and sufficient condition for $det A$ to be inhomogeneous, where $A$ is any $p\times p$ ($p\geq 2$) minor of $Z^{(v)}$.
\begin{theorem}\label{t.hompaththm}
	Let A be any $p \times p$ ($p \geq 2$) minor of $Z^{(v)}$. Then det$A$ is inhomogeneous if and only if A contains at least one column of the form $(0\dots 0 , 1 , \dots z \dots)^t$ such that $\exists$ a non zero path which passes through z, where z $\in \{z_{ij} | 1\leq i \leq n, 1\leq j \leq n\} $ and $\vec{v}^t$ denotes the transpose of the vector $\vec{v}$.
\end{theorem}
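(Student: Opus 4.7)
The plan is to prove both implications via degree-of-paths arguments, based on the simple observation that in any nonzero path of $A$, a pivot entry $1$ of $Z^{(v)}$ contributes a factor of degree $0$ while a variable entry contributes a factor of degree $1$. Thus two nonzero paths of $A$ have the same degree if and only if, column by column, they agree on whether the chosen entry is a pivot or a variable.

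For the forward direction, I plan to argue by contrapositive. Assume that for every column of $A$ containing a pivot $1$ of $Z^{(v)}$, either no variable appears below that pivot in $A$, or no nonzero path of $A$ passes through any such variable. In any column of $A$ that does \emph{not} contain a pivot of $Z^{(v)}$, the entries are all $0$'s and variables, so every nonzero path picks a variable there. In a column that does contain a pivot, the hypothesis forces every nonzero path to pick the pivot $1$ itself (it cannot pick $0$, and by assumption it cannot pick any variable below the pivot). Every nonzero path therefore has the same degree, equal to the number of columns of $A$ that do not contain a pivot, and $\det A$ is homogeneous.

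For the backward direction, let $\beta$ be a nonzero path of $A$ passing through a variable $z$ in a column that also contains a pivot $1$, with $z$ strictly below the pivot; denote the row of $z$ by $i_z$ and the pivot row by $i_1$, both counted from the bottom (so $i_z < i_1$). Since $\beta$ picks $z$ rather than the pivot in this column, $\beta$ must pick some other nonzero entry in row $i_1$ in a different column. Because every pivot of $Z^{(v)}$ has only zeros to its right in its own row, this ``other'' column must lie strictly to the left of the original, and the entry picked there is a variable, say $y$. I then define $\alpha$ to agree with $\beta$ outside these two distinguished columns, and in them to pick the pivot $1$ (at row $i_1$) in the original column and the entry in row $i_z$ of the other column.

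The main obstacle is verifying that $\alpha$ is indeed a nonzero path, i.e.\ that the entry of $Z^{(v)}$ at row $i_z$ of the other column is nonzero. An entry of $Z^{(v)}$ is forced to $0$ only in two ways: it may lie strictly above the pivot of its own column, or it may lie in a pivot row of $Z^{(v)}$ strictly to the right of that row's pivot. The first is ruled out because $y \ne 0$ places $i_1$ at or below the pivot of the other column, and $i_z < i_1$; the second is ruled out because $z$ being a variable forces $i_z$ not to be a pivot row of any column to the left of the original, and in particular not of the other column. Once this verification is in place, $\alpha$ and $\beta$ differ only in the two distinguished columns, with $\alpha$ trading two variables for one pivot $1$ and one variable, so $\deg \alpha = \deg \beta - 1$. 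The two paths then witness the inhomogeneity of $\det A$.
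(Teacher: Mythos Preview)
Your proof is correct, and your backward direction is essentially identical to the paper's: both start from a nonzero path through $z$, locate the variable the path picks in the pivot row $i_1$ (necessarily in a column to the left), swap the two entries in those two columns, verify the resulting entry is a nonzero variable, and obtain a second path of degree one less. Your verification that the swapped-in entry is nonzero is the same two-case check the paper does. One small omission: you should say explicitly why $\alpha$ and $\beta$ both survive as monomials in $\det A$ without cancellation (the paper invokes its Lemma~\ref{l.zeroRorC} here; the point is simply that distinct nonzero paths involve distinct variable factors and hence distinct monomials).

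Your forward direction is cleaner than the paper's. The paper argues by a three-case breakdown (no pivot column; pivot columns with only zeros below; pivot columns with variables below but no nonzero path through them), in each case expanding $\det A$ along such a column and recursing to a smaller minor, eventually landing in Fact~1. You bypass all of this by observing directly that, under the negated hypothesis, in every pivot column every nonzero path is forced to pick the $1$, while in every non-pivot column it picks a variable; hence every nonzero path has degree equal to the number of non-pivot columns. This is a genuine simplification over the paper's inductive reduction.

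One minor inaccuracy in your opening paragraph: the claimed ``if and only if'' (same degree $\iff$ columnwise agreement on pivot-vs-variable) is false in the ``only if'' direction, since two paths can trade a pivot in one column for a variable in another and keep the same total degree. You never actually use this direction, so it does not affect the argument, but you should drop the ``only if''.
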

\begin{proof}
	$\impliedby$ Let the column (of the form $(0,\ldots,0,1,\ldots,z,\ldots)^t$) under consideration be the $j$-th column of $A$, and let the $1$ in that column correspond to the $i_0$-th row of $A$. Notice that $j\neq 1$, because if $j=1$, then there will not exist any nonzero path through any variable entry $z$ lying below $1$ (as such a path will pick only $0$ from the row $i_0$). \par 
	Let the entry $z$ be in the $i$-th row of the $j$-th column of $A$. Clearly $i<i_0$, due to the structure of the column $j$. By assumption, there exists a nonzero path in $A$ passing through $z$ (which is placed at the $(i,j)$-th cell of $A$). Let $\alpha$ be such a path. Then $\alpha$ has picked $z$ from the $(i,j)$-th cell of $A$. Since $\alpha$ is a nonzero path, it must have picked a nonzero entry from the $i_0$-th row of $A$, call it $z_{i_0,j_0}$. Since the row $i_0$ can contain a unique $1$ entry, therefore $z_{i_0,j_0}$ is a nonzero variable entry. \par 
	If $j_0>j$, then $\alpha$ has to pick a zero, which is not possible as $\alpha$ is a nonzero path. If $j_0=j$, then $z_{i_0,j_0}=z_{i_0,j}=1$. But $\alpha$ has already picked $z$ from the $(i,j)$-th cell of $A$, so that gives us a contradiction. Hence we must have that $j_0<j$.\par 
	The entry $z_{i,j_0}$ of $A$ is a nonzero variable (because, if it is $1$, then all entries to its right and top are zeroes, which is impossible. And if it is $0$, then there exists a $1$ either to the left of it or below it, which forces the entries $z$ or $z_{i_0,j_0}$ to be zero, a contradiction). \par 
	Now, we can construct another path $\beta$ in $A$ by switching the entries $z_{i_0,j_0}$ and $z$ of path $\alpha$ with the entries $1$ (at the $(i_0,j)$-th cell of $A$) and $z_{i,j_0}$ of $A$ respectively. The resulting path $\beta$ is then a monomial of degree exactly $1$ less than that of $\alpha$.  But both $\alpha$ and $\beta$ appear in $det A$ and they do not cancel out with any other path in $A$ (due to the proof of Lemma \ref{l.zeroRorC}). Hence $det A$ is inhomogeneous. \par
	$\implies$ In this part, we will prove the contrapositive statement. That is, we will prove that if the condition (given in the statement of the theorem) doesn't hold, then $det A$ is homogeneous.\par 
	Suppose the condition (of the theorem) doesn't hold. Then there doesn't exist any column of the given form. This implies that one of these $3$ cases can occur:\\
	\noindent\textbf{Case I:} There is no column in $A$ having an entry $1$ in it.\\
	In this case, we are done by \hyperref[fact1]{Fact 1}. \\
	\noindent\textbf{Case II:} In all the columns in $A$ having an entry $1$ in them, all the entries below the entry $1$ are $0$.\\
	Look at such a column. Say, it is column $j$. Let $1$ be placed in the $i$-th row of the column $j$. Expand $detA$ along column $j$. Then $det A$ equals the determinant of a $(p-1)\times (p-1)$ submatrix of $A$ formed by removing the $i$-th row and the $j$-th column of $A$. Look at this $(p-1)\times (p-1)$ submatrix of $A$, call it $A'$. If $A'$ also contains a column of the type $(0,\ldots,0,1,0,\ldots,0)^t$ in it, then repeat the same procedure to form a smaller submatrix. Finally, after removing all columns of the type $(0,\ldots,0,1,0,\ldots,0)^t$ from $A$, we are left with a square submatrix of $A$ of size strictly less than $p$. This submatrix has no column having a $1$ in it. Then we appeal to \hyperref[fact1]{Fact 1} and we are done.\\
	\noindent\textbf{Case III:} There exist column(s) in $A$ of the form $(0,\ldots,0,1,\ldots,z,\ldots)^t$, but there does not exist any nonzero path in $A$ through any nonzero variable entry below\ $1$.\\
	Say, the column $j$ of $A$ is some such column and the entry $1$ in this column $j$ is at the $i$-th row. Since there does not exist any nonzero path through any nonzero variable entry below $1$, it follows that $det A=det A^0$, where $A^0$ is the $(p-1)\times(p-1)$ submatrix of $A$ obtained by removing the $i$-th row and the $j$-th column from $A$.\par
	In $A^0$, if a similar type of column exists, then repeat the same procedure. Stop the procedure when there is no such column left in the smaller submatrix formed in this manner. Clearly, this procedure will stop after a finite stage and we will be left with a square submatrix of $A$ of size strictly smaller than $p$, which has no column of this type. Such a submatrix falls into either case I or case II and hence we are done.
\end{proof}
\begin{example}\label{e.hompaththm}
	Consider the matrix \[A=\begin{bmatrix}
		z_{31} &1 &0\\
		z_{21} &z_{22} &z_{23}\\
		z_{11} &z_{12} &z_{13}
	\end{bmatrix}\]
	Then, $detA= z_{31}z_{22}z_{13}- z_{31}z_{12}z_{23}-z_{11}z_{23}+z_{13}z_{21}$ which is clearly an inhomogeneous polynomial.
\end{example}
\noindent\textbf{A change in convention:} Recall that according to our convention of $Z^{(v)}$ (which we call the old convention), for any $v\in S_n$, we are taking the value $v(j)$ for each $j\in\{1,\ldots,n\}$ counting from top to bottom. For example, say in $S_7$, $v(1)=4$ means that the $1$ in the first column of the $Z^{(v)}$ matrix lies in the $4$-th row from the top, whereas the rows in the $Z^{(v)}$ matrix are labeled from bottom to top.
\par
We will use a different convention for $v(j)$ \textbf{in the rest of this section}, which is as follows:
\begin{quote}
If $v(j)=i$ for some $i,j\in\{1,\ldots,n\}$ in the old convention, then in the new convention, it would mean $v(j)=n-i+1$. That is, we are counting the row number corresponding to $v(j)$ from bottom to top. 
\end{quote}
One can readily see that the new convention is equivalent to the old convention because, we can replace the $v(j)$'s written below (in the rest of this section) by $n-v(j)+1$'s. We are using this new convention because it simplifies the statements below and avoids unnecessary cluttering of symbols.\par 
Notice that if we denote a permutaion in $S_n$ as $v$ (in the new convention), then it is equivalent to the permutation $v'$ in the old convention, where $v'(j)=n-v(j)+1$ for all $1\leq j\leq n$. Using this identification $v\leftrightarrow v'$, one can see that if we discard some $v$ following the new convention (in steps 6 and 7 of the algorithm mentioned in \S \ref{ss.conclusion}), then the corresponding $v'$ has been discarded in the old convention. \\
\noindent\textbf{Caution:} Notice that the above change of convention for $v$ (or $v(j)$'s rather) doesn't affect the convention of $v^{-1}$. This is because the order of labeling of the columns of $Z^{(v)}$ is from left to right.
\begin{lemma}\label{l.LD}
	Let $A$ be as in \hyperref[o.o2]{Observation 2}, with rows $\{i_1 < \dots < i_p\}$ and columns $\{j_1 < \dots < j_p\}$ w.r.t $Z^{(v)}$. Then $A$ has one row or column zero if and only if it satisfies at least one of the following properties related to $v\in S_n$. \\
	(1) $v(j_k) < i_1 $ for some $1\leq k \leq p$.\\
	(2) $v^{-1}(i_l) < j_1$ for some $1\leq l \leq p$.\\
	(3) min$\{v(j_k) \ | \ 1\leq k \leq p)\} \geq i_1$ and $\exists\ k\in\{1,\ldots,p\}$ such that 
		$i_s<v(j_k)<i_{s+1},1\leq s \leq p-1$ $(or \ v(j_k)>i_p)$ and $v^{-1}(i_\xi)<j_k \ \forall \ 1\leq \xi \leq s $ $(or \ v^{-1}(i_\xi) < j_k \ \forall \ 1\leq \xi \leq p)$ respectively. \\
	(4) min$\{v^{-1}(i_s) \ | \ 1\leq s \leq p)\} \geq j_1$ and $\exists$ s such that 
	$j_\alpha<v^{-1}(i_s)<j_{\alpha+1},1\leq \alpha \leq p-1$ $(or \ v^{-1}(i_s)>j_p)$ then $v(j_\beta)<i_s \ \forall \ 1\leq \beta \leq \alpha $ $(or \ v(j_\beta) < i_s \ \forall \ 1\leq \beta \leq p)$ respectively.\\
Here, (1) and (3) are conditions for when there is a zero column and (2) and (4) are conditions for when there is a zero row.
\end{lemma}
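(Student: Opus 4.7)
The plan is to reduce the claim to a pointwise analysis of when an individual entry $z_{i_l,j_k}$ of $A$ vanishes, and then to aggregate this information column-by-column and row-by-row. First I would unpack the defining rules of $Z^{(v)}$ in the new convention: the $1$ of the $j$-th column sits in row $v(j)$ (counted from the bottom), every entry strictly above that $1$ in the same column is $0$, every entry strictly to the right of that $1$ in the same row is $0$, and every other entry is a free indeterminate. Translating this to our submatrix $A$, the $(i_l,j_k)$ entry is $1$ precisely when $v(j_k)=i_l$, is $0$ whenever $v(j_k)<i_l$ or $v^{-1}(i_l)<j_k$, and is an indeterminate otherwise.

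Next, I fix a column index $k$ and ask when column $j_k$ of $A$ is identically zero. Combining the pointwise criterion, this happens iff $v(j_k)\neq i_l$ for every $l$ and, for each $l$, at least one of the alternatives $v(j_k)<i_l$ or $v^{-1}(i_l)<j_k$ holds. I then split on the position of $v(j_k)$ in the increasing chain $i_1<\cdots<i_p$: (a) if $v(j_k)<i_1$, the first alternative is automatic for every $l$, which is exactly condition (1); (b) if $v(j_k)=i_l$ for some $l$, column $j_k$ contains a $1$ and so is nonzero; (c) otherwise there is a unique index $s$ with $i_s<v(j_k)<i_{s+1}$ (using the convention $i_{p+1}=+\infty$, which absorbs the case $v(j_k)>i_p$), and for $l>s$ the alternative $v(j_k)<i_l$ is automatic, while for $l\le s$ we are forced into $v^{-1}(i_l)<j_k$. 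Case (c) is precisely the second clause of condition (3); the $\min$-hypothesis in (3) is just the explicit negation of (1), inserted to keep the two cases disjoint.

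Then I repeat the argument with the roles of rows and columns interchanged, applied to a fixed row $i_l$. Now the relevant quantity is the column position $v^{-1}(i_l)$ of the unique $1$ of row $i_l$ of $Z^{(v)}$; row $i_l$ of $A$ is zero iff $v^{-1}(i_l)\neq j_k$ for all $k$ and, for each $k$, either $v^{-1}(i_l)<j_k$ (entry lies to the right of the $1$) or $v(j_k)<i_l$ (entry lies above the $1$). Splitting on where $v^{-1}(i_l)$ sits relative to $j_1<\cdots<j_p$ yields conditions (2) and (4) in exactly the same way as (1) and (3) were produced above. Putting the two halves together, $A$ has a zero row or a zero column iff at least one of (1)--(4) holds, which gives the lemma.

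The main difficulty is not conceptual but bookkeeping: one must stay consistent with the new convention (rows indexed from the bottom, so ``$v(j_k)<i_l$'' really means ``the $1$ of column $j_k$ sits \emph{below} the row $i_l$''), track the strictness of the inequalities correctly, and handle the boundary cases $v(j_k)>i_p$ and $v^{-1}(i_l)>j_p$ by the convention $i_{p+1}=j_{p+1}=+\infty$. The slightly redundant-looking $\min$ clauses in (3) and (4) also have to be reconciled with the existential statements that follow them, by reading them as the explicit complements of (1) and (2) respectively.
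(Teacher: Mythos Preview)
Your proposal is correct and takes essentially the same approach as the paper: both arguments analyze when a fixed column $j_k$ (resp.\ row $i_l$) of $A$ is zero by splitting on where $v(j_k)$ (resp.\ $v^{-1}(i_l)$) falls relative to the chain $i_1<\cdots<i_p$ (resp.\ $j_1<\cdots<j_p$), obtaining (1)/(3) (resp.\ (2)/(4)) in the obvious cases. Your version is slightly more streamlined because you first isolate the pointwise vanishing criterion for a single entry $z_{i_l,j_k}$ before aggregating, whereas the paper argues directly at the level of rows and columns, but the underlying case split and logic are identical.
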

\begin{proof}
	$\implies$ Suppose one column is zero. This can happen in 3 ways. First, when for any of the columns $j_1,\dots ,j_p$, $v(j_k)$ (for any $1\leq k\leq p$) lies below all the rows $i_1,\dots,i_p$ and one can see that in this case it suffices that $v(j_k) < min\{i_1,\dots ,i_p\}=i_1$. This is (1) in our lemma.\par 
	Now, secondly, it can happen that $v(j_k)$ for some $k\in\{1,\ldots,p\}$ lies between any two rows in $\{i_1,\dots,i_p\}$ , i.e, in some row in the original $Z_{s,t}^{(v)}$ matrix for some $s,t$ (it does not lie on any of the rows $\{i_1,\dots,i_p\}$ as else the column won't be zero in $A$) and let $\{i_1,\dots,i_s\}$ be the set of rows below it relative to $A$. Then, all the rows above, i.e, $i_{s+1},\dots, i_p$, are 0 at the $j_k^{th}$ column. Now, if for all the rows $i_{\xi}, 1\leq \xi \leq s$, $v^{-1}(i_{\xi})$ lies to the left of $j_k$, then by definition of $Z^{(v)}$, all the rows $i_1,\dots,i_s$ are also 0 in the $j_k^{th}$ column. This will make the whole $j_k^{th}$ column 0 in terms of $A$. Similar reasoning gives the third case that if $v(j_k)>i_p$, then  $v^{-1}(i_\xi) < j_k \ \forall \ 1\leq \xi \leq p$ makes the entire column zero. We have joined the second and third way together due to their similarity and labeled it as condition (3) in our lemma. So, one can see that for a column to be zero, we must have at least one of (1) or (3) must hold. Similar reasoning shows that for a row to be zero, at least one of (2) or (4) must hold.\\
	$\impliedby$ If (1) or (3) holds, we have a zero column in $A$, if (2) or (4) holds, we have a zero row in $A$. So, if any of the four points hold, we have either a zero column or a zero row hereby proving the lemma in this direction.
\end{proof}
Now, even if $\nexists$ any zero row or column, we can still have $A$ to be singular,i.e, the scenario where there doesn't exist any non zero path even though there isn't any zero row or column. In order to address this, consider $A$ as above and that none of the conditions of Lemma \ref{l.LD} hold, i.e $A$ has no zero row or zero column.\par
Consider the first column $j_1$. There exists at least one non-zero entry. Pick one such entry. Call it $z_{\alpha_1,j_1}$. In order for us to have a non zero path through $z_{\alpha_1,j_1}$, we must have a non-zero entry at column $j_2$ in any row other than $\alpha_1$. Now, the condition for not having a non zero entry at the $j_2^{th}$ column (except maybe at $\alpha_1^{th}$ row) is that at least one of the following conditions should hold:\\
(1) $v(j_2) > i_p$ and $v^{-1}(i_s) < j_2 \ \forall\ s\in\{1,\ldots,p\}\setminus\{\alpha_1\}$.\\
(2) $v(j_{2})=i_{s_0} (p\geq s_0 > 1)=\alpha_1$ and $v^{-1}(i_s) < j_{2} \ \forall\ s\in\{1,\ldots,s_0-1\}\setminus\{\alpha_1\}$ or $v(j_2)=i_1=\alpha_1$ .\\
(3) $i_{s_0} < v(j_2) < i_{{s_0 + 1}}$, for some $s_0 \in \{1,\dots,p-1\}$ and $v^{-1}(i_s) < j_2 \ \forall\ s\in\{1,\ldots,s_0\}\setminus\{\alpha_1\}$. \\ \\
Note here that the condition $v(j_2)<i_1$ is not included above (in conditions (1),(2),(3)) because $i_1$ is the bottommost row in $A$.\par 
Let $i\in\{1,\ldots,p-1\}$. For a general $j_{i+1}^{th}$ column, the condition for not having a non zero entry at the $j_{i+1}^{th}$ column (except maybe at $\alpha_k^{th}$ rows, $1\leq k \leq i$) is that at least one of the following conditions should hold:\\
(1) $v(j_{i+1}) > i_p$ and $v^{-1}(i_s) < j_{i+1} \ \forall\ s\in\{1,\ldots,p\}\setminus\{\alpha_1,\ldots,\alpha_i\}$.\\
(2) $v(j_{i+1})=i_{s_0} (p\geq s_0 > 1)\in \{\alpha_1, \dots, \alpha_i\}$ and $v^{-1}(i_s) < j_{i+1} \ \forall\ s\in\{1,\ldots,s_0-1\}\setminus\{\alpha_1,\ldots,\alpha_i\}$ or $v(j_{i+1})=i_1 \in \{\alpha_1, \dots, \alpha_i\}$ .\\
(3) $i_{s_0} < v(j_{i+1}) < i_{{s_0 + 1}}$, for some $s_0 \in \{1,\dots,p-1\}$ and $v^{-1}(i_s) < j_{i+1} \ \forall\ s\in\{1,\ldots,s_0\}\setminus\{\alpha_1,\ldots,\alpha_i\}$. \par 
Let us call the complement of the above conditions as \textbf{conditions $\delta_i $ ($1\leq i \leq p-1$)}. Then, we have the following proposition : 
\begin{prop}\label{p.delta_i}
	Assume that the conditions of Lemma \ref{l.LD} doesn't hold. Then, there exist a non zero path in $A$ if and only if there exist a non zero entry in the $j_1^{th}$ column, (say at the $\alpha_1^{th}$ row), such that the $\delta_i$ holds for each $i$, $1\leq i \leq p-1$. 
\end{prop}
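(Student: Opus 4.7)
The key reformulation is that $\delta_i$, being by definition the negation of the three mutually exclusive conditions preceding the proposition, is logically equivalent to the statement: there exists an index $\alpha_{i+1}\in\{1,\ldots,p\}\setminus\{\alpha_1,\ldots,\alpha_i\}$ such that the entry of $A$ in its $\alpha_{i+1}$-th row and $(i+1)$-th column (i.e., $z_{i_{\alpha_{i+1}},j_{i+1}}$ in $Z^{(v)}$) is nonzero. Indeed, conditions (1)--(3) are a complete case split on where $v(j_{i+1})$ sits among $i_1<\cdots<i_p$, namely $v(j_{i+1})>i_p$, $v(j_{i+1})=i_{s_0}$, or $i_{s_0}<v(j_{i+1})<i_{s_0+1}$; the remaining possibility $v(j_{i+1})<i_1$ is ruled out because the conditions of Lemma \ref{l.LD} fail. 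In each of the three admissible cases, the accompanying row inequalities $v^{-1}(i_s)<j_{i+1}$ are exactly those needed, via the defining zero pattern of $Z^{(v)}$ (namely $z_{r,c}=0$ iff $v(c)<r$ or $v^{-1}(r)<c$), to force every entry of column $j_{i+1}$ outside the rows indexed by $\alpha_1,\ldots,\alpha_i$ to vanish. Passing to the negation reverses the forcing and gives the desired existence.

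Given this reformulation, the ``if'' direction is a straightforward greedy induction. Starting with the hypothesized $\alpha_1$ in column $j_1$ with $z_{i_{\alpha_1},j_1}\neq 0$, apply $\delta_1$ to obtain $\alpha_2\notin\{\alpha_1\}$ with $z_{i_{\alpha_2},j_2}\neq 0$; then apply $\delta_2$ (which is a condition on the already chosen $\alpha_1,\alpha_2$) to produce $\alpha_3$, and so on up to $\delta_{p-1}$, which yields $\alpha_p$. The resulting tuple $(\alpha_1,\ldots,\alpha_p)$ consists of distinct row indices and picks out a nonzero entry from each of the columns $j_1,\ldots,j_p$ in turn, so it is, by definition, a nonzero path in $A$.

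For the ``only if'' direction, a nonzero path in $A$ is by definition a sequence $\{z_{i_{\alpha_1},j_1},\ldots,z_{i_{\alpha_p},j_p}\}$ with the $\alpha_k$ pairwise distinct and every entry nonzero. This $\alpha_1$ serves as the required nonzero entry in column $j_1$. For every $i\in\{1,\ldots,p-1\}$, the index $\alpha_{i+1}$ lies outside $\{\alpha_1,\ldots,\alpha_i\}$ and produces a nonzero entry in column $j_{i+1}$, so by the reformulation above, $\delta_i$ holds.

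The only delicate point is the bookkeeping in the first paragraph: one must patiently verify that cases (1)--(3) really do exhaustively and sharply describe all configurations under which the ``free'' positions of column $j_{i+1}$ (those in rows other than $i_{\alpha_1},\ldots,i_{\alpha_i}$) are all forced to be zero by the structure of $Z^{(v)}$. This is a direct unpacking of the two defining zero-patterns of $Z^{(v)}$ together with the exclusion of the fourth case afforded by Lemma \ref{l.LD}; once it is accepted, both implications reduce to a transparent greedy construction as above.
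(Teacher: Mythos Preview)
Your proof is correct and matches the paper's intended argument: the paper does not supply a separate proof of this proposition, since the discussion immediately preceding it constructs the conditions $\delta_i$ precisely as the iterative ``there is still a nonzero entry available in column $j_{i+1}$ outside the rows already used'' assertion, and the proposition is meant to be read as the evident summary of that construction. Your write-up simply makes this explicit---the reformulation of $\delta_i$ via the zero-pattern of $Z^{(v)}$ and the greedy build of the path in both directions are exactly what the paper's paragraph before the proposition is setting up.
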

\begin{example}\label{e.delta_i}
	Let us borrow the same $S_n,v,w$, as in Example \ref{e.discardingminors} and look at the southwest sub-matrix $Z_{33}^{(v)}$. We have:
	\[
	Z_{33}^{(v)}=\begin{bmatrix}
		z_{31}&1&0\\
		z_{21}&z_{22}&1\\
		z_{11}&z_{12}&z_{13}
	\end{bmatrix}\]	
	Here one sees that $p=3, i_k=k=j_k$ for $1\leq k\leq 3$ w.r.t $Z^{(v)}$. Pick any entry from first column, say it is in $\alpha_1^{th}$ row. For each $i\in\{1,2\}$, consider the conditions:\\
	(1) $v(i+1) > 3$ and $v^{-1}(s) < {i+1} \ \forall\ s\in\{1,2,3\}\setminus\{\alpha_1,\ldots,\alpha_i\}$.\\
	(2) $v(i+1)={s_0} \ (3\geq s_0 > 1)\in \{\alpha_1, \dots, \alpha_i\}$ and $v^{-1}(s) < {i+1} \ \forall\ s\in\{1,\ldots,s_0-1\}\setminus\{\alpha_1,\ldots,\alpha_i\}$ or $v(i+1)=1 \in \{\alpha_1, \dots, \alpha_i\}$ .\\
	(3) ${s_0} < v({i+1}) < {{s_0 + 1}}$, for some $s_0 \in \{1,2\}$ and $v^{-1}(s) < {i+1} \ \forall\ s\in\{1,\ldots,s_0\}\setminus\{\alpha_1,\ldots,\alpha_i\}$. \\
	Then, $\delta_i$ is the complement of the statement that ``at least one of the conditions (1),(2) and (3) should hold for each i ($1\leq i \leq 2$)". We see that no matter which entry we pick from the first column of $Z_{33}^{(v)}$, for each $i$, none of the conditions (1),(2) or (3) hold, i.e, for each i, $\delta_i$ holds. Hence, there exist a non zero path in A, demonstrating the above Proposition.
\end{example}
Let $A$ be any $p \times p$ ($p \geq 2$) minor of $Z^{(v)}$. Any such $A$ that doesn't have a column of the form $(0,\dots,0,1,\dots,z,\dots)^t$ is already homogeneous, by Theorem \ref{t.hompaththm}. Then (for the purpose of finding inhomogeneous minors), by using the same theorem, we need to focus on the minors $A$ that do have a column of this form (namely, of the form $(0,\ldots,0,1,\ldots,z,\ldots)^t$), and a non zero path through an indeterminate $z$ lying below 1.
\begin{remark}\label{r.reducingpaths}
Let $A$ be as in the previous paragraph. Notice that a non zero path through $z$ (lying in a column of the form $(0,\ldots,0,1,\ldots,z,\ldots)^t$) must pick an indeterminate, say $z'$ from the row of 1, from its left. So, we can remove the row of 1 and the column where $z'$ is from, and focus on finding a nonzero path through some $z$ in the $(p-1)\times (p-1)$ sub-matrix thus obtained (which has now the column of $z$ of the form $(0,\dots,0,\dots,z,\dots)^t$), because we can always add the $z'$ to the path obtained in the sub-matrix in appropriate position and get a non zero path through $z$ in $A$.
\end{remark}
Abusing notation, let us call the sub-matrix also as $A$ and assume WLOG that $A$ is of size $p$, with rows $\{i_1 < \dots < i_p\}$ and columns $\{j_1 < \dots < j_p\}$ in relation to $Z^{(v)}$ (the rows being counted from bottom to top and the columns being counted from left to right, in increasing order). Let  $(0,\dots,0,\dots,z,\dots)^t$ be the column $j_{d}$ and pick an indeterminate $z$, say, from row $i_c$. Assume for the time being that $d>1$. The situation $d=1$ will be taken care of, in the general case.\par 

Consider the first column $j_1$. We must have that there exists at least one non-zero entry in a row other than $i_c$. The condition for not having a non zero entry in column $j_1$ in a row other than $i_c$ is that at least one of the following two conditions must hold:\\
(1) $v(j_1) > i_p$ and $v^{-1}(i_s) < j_1 \ \forall\ s\in\{1,\ldots,p\}\setminus\{c\}$.\\
(2) $i_{s_0} < v(j_1) < i_{{s_0 + 1}}$, for some $s_0 \in \{1,\dots,p-1\}$ and $v^{-1}(i_s) < j_1 \ \forall\ s\in\{1,\ldots,s_0\}\setminus\{c\}$.\\
We call the complement of the above \textbf{condition as $\gamma_0$}, i.e, we say $\gamma_0$ holds if none of the conditions above, hold.\\
So, assuming $\delta_0$ holds, it must be that we have a non zero entry at the $j_1^{th}$ column in some row other than $i_c$. Pick one such entry. Call it $z_{{i_{\alpha_1}},j_1}$. In order for us to have a non zero path through $z_{{i_{\alpha_1}},j_1}$, we must have a non-zero entry at column $j_2$ in any row other than $i_{\alpha_1}$ or $i_{c}$. Now, the condition for not having a non zero entry at the $j_2^{th}$ column (except maybe at the $i_{\alpha_1}^{th}$ and the $i_c^{th}$ row) is that at least one of the following conditions should hold:\\
(1) $v(j_2) > i_p$ and $v^{-1}(i_s) < j_2 \ \forall\ s\in\{1,\ldots,p\}\setminus\{\alpha_1,c\}$.\\
(2) $i_{s_0} < v(j_2) < i_{{s_0 + 1}}$, for some $s_0 \in \{1,\dots,p-1\}$ and $v^{-1}(i_s) < j_2 \ \forall\ s\in\{1,\ldots,s_0\}\setminus\{\alpha_1,c\}$. \\
(3) $v(j_{2})=i_{s_0} (p\geq s_0 > 1)= \alpha_1$ and $v^{-1}(i_s) < j_{i+1} \ \forall\ s\in\{1,\ldots,s_0-1\}\setminus\{\alpha_1,c\}$.\par 

Let $i\in\{1,\ldots,p-1\}$ be arbitrary. For a general $j_{i+1}^{th}$ column (excluding column $j_d$), the condition for not having a non zero entry at the $j_{i+1}^{th}$ column (except maybe at the $i_{\alpha_k}^{th}$ and $i_c^{th}$ row, where $1\leq k \leq i$) is that at least one of the following conditions should hold:\\
For $i\in\{1,\dots,d-2\}$\\
(1) $v(j_{i+1}) > i_p$ and $v^{-1}(i_s) < j_{i+1} \ \forall\ s\in\{1,\ldots,p\}\setminus\{\alpha_1,\ldots,\alpha_i,c\}$.\\
(2) $i_{s_0} < v(j_{i+1}) < i_{{s_0 + 1}}$, for some $s_0 \in \{1,\dots,p-1\}$ and $v^{-1}(i_s) < j_{i+1} \ \forall\ s\in\{1,\ldots,s_0\}\setminus\{\alpha_1,\ldots,\alpha_i,c\}$. \\
(3) $v(j_{i+1})=i_{s_0} (p\geq s_0 > 1), s_0 \in \{\alpha_1, \dots, \alpha_i\}$ and $v^{-1}(i_s) < j_{i+1} \ \forall\ s\in\{1,\ldots,s_0-1\}\setminus\{\alpha_1,\ldots,\alpha_i,c\}$.\\
For $i\in\{d,\dots,p-1\}$\\
(1) $v(j_{i+1}) > i_p$ and $v^{-1}(i_s) < j_{i+1} \ \forall\ s\in\{1,\ldots,p\}\setminus\{\alpha_1,\ldots,\alpha_i,c\}$.\\
(2) $i_{s_0} < v(j_{i+1}) < i_{{s_0 + 1}}$, for some $s_0 \in \{1,\dots,p-1\}$ and $v^{-1}(i_s) < j_{i+1} \ \forall\ s\in\{1,\ldots,s_0\}\setminus\{\alpha_1,\ldots,\alpha_i,c\}$. \\
(3) $v(j_{i+1})=i_{s_0} (p\geq s_0 > 1), s_0 \in \{\alpha_1, \dots, \alpha_i,c\}$ and $v^{-1}(i_s) < j_{i+1} \ \forall\ s\in\{1,\ldots,s_0-1\}\setminus\{\alpha_1,\ldots,\alpha_i,c\}$ or $v(j_{i+1})=i_1 \in \{i_{\alpha_1}, \dots, i_{\alpha_i},i_c\}$.
\par
Let us call the complement of the above conditions as \textbf{conditions $\gamma_i $($1\leq i \leq p-1$)}, i.e , for each $i$, we say $\gamma_i$ holds, if none of the conditions $(1),(2),(3)$ hold. Recall that we also have defined \textbf{condition $\gamma_0$} above.\\
\textbf{For the case when $d=1$}, we notice that for each column $j_k$, $k\in\{2,\dots,p-1\}$, we have conditions similar to $(1),(2),(3)$ like in the above sub-case of $i\in\{d,\dots,p-1\}$. Only notice that in case of $j_2(i.e, \gamma_1)$, there will be no $\alpha_i$ term, i.e, its conditions will look like :\\
(1) $v(j_{2}) > i_p$ and $v^{-1}(i_s) < j_{i+1} \ \forall\ s\in\{1,\ldots,p\}\setminus\{c\}$.\\
(2) $i_{s_0} < v(j_2) < i_{{s_0 + 1}}$, for some $s_0 \in \{1,\dots,p-1\}$ and $v^{-1}(i_s) < j_{2} \ \forall\ s\in\{1,\ldots,s_0\}\setminus\{c\}$. \\
(3) $v(j_{2})=i_{s_0} (p\geq s_0 > 1), s_0=c$ and $v^{-1}(i_s) < j_{2} \ \forall\ s\in\{1,\ldots,s_0-1\}$ or $v(j_{2})=i_1=i_c$.\\
Keeping this in mind, we write the same conditions for each $i\in \{1,\dots,p-1\}$ as above:\\
(1) $v(j_{i+1}) > i_p$ and $v^{-1}(i_s) < j_{i+1} \ \forall\ s\in\{1,\ldots,p\}\setminus\{\alpha_1,\ldots,\alpha_i,c\}$.\\
(2) $i_{s_0} < v(j_{i+1}) < i_{{s_0 + 1}}$, for some $s_0 \in \{1,\dots,p-1\}$ and $v^{-1}(i_s) < j_{i+1} \ \forall\ s\in\{1,\ldots,s_0\}\setminus\{\alpha_1,\ldots,\alpha_i,c\}$. \\
(3) $v(j_{i+1})=i_{s_0} (p\geq s_0 > 1), s_0 \in \{\alpha_1, \dots, \alpha_i,c\}$ and $v^{-1}(i_s) < j_{i+1} \ \forall\ s\in\{1,\ldots,s_0-1\}\setminus\{\alpha_1,\ldots,\alpha_i,c\}$ or $v(j_{i+1})=i_1 \in \{i_{\alpha_1}, \dots, i_{\alpha_i},i_c\}$.\\
and call the complement of the above \textbf{conditions $\gamma_i'$} for $1\leq i\leq p-1$.\\
Together, we have the following lemma:
\begin{lemma}\label{l.gamma_i}
	Let A be any $p \times p$ ($p \geq 2$) with rows $\{i_1 < \dots < i_p\}$ and columns $\{j_1 < \dots < j_p\}$ in relation to $Z^{(v)}$, having a column of the form $(0,\dots,0,\dots,z,\dots)^t$. Then, there exists a non zero path through the $z(=z_{i_c,j_d})$ of this column if and only if the conditions $\gamma_i$ above, hold for each $i$, $0\leq i \leq p-1$ in case $d\neq 1$, and the conditions $\gamma_i'$ above, hold for each $i$, $1\leq i \leq p-1$ in case $d=1$.
\end{lemma}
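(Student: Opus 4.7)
The plan is a column-by-column induction, since the conditions $\gamma_i$ (and $\gamma_i'$) are defined precisely as the negations of the obstruction to extending a partial nonzero path through one more column, given that the entry $z = z_{i_c,j_d}$ is already pinned down. In this sense the statement is essentially a repackaging of Proposition \ref{p.delta_i} adapted to force the path to visit the prescribed entry $z$.

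For the ($\Leftarrow$) direction, assume first $d\neq 1$. Fix $z = z_{i_c,j_d}$ as one vertex of the path under construction. By $\gamma_0$, column $j_1$ contains a nonzero entry at some row $i_{\alpha_1}$ with $\alpha_1\neq c$; choose one such entry. Proceed inductively: suppose nonzero entries $z_{i_{\alpha_k},j_k}$ have been selected for $k=1,\ldots,i$, in rows pairwise distinct from each other and from $i_c$. If $j_{i+1}=j_d$, the path already uses $z$ in this column, so skip and advance. Otherwise, $\gamma_i$ asserts that column $j_{i+1}$ has at least one nonzero entry in a row not in $\{\alpha_1,\ldots,\alpha_i,c\}$; pick any such entry as $z_{i_{\alpha_{i+1}},j_{i+1}}$. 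Continuing to $i=p-1$ produces a nonzero path through $z$. For $d=1$ the argument is identical but begins at column $j_2$ using $\gamma_1'$, because column $j_1$ is already supplied by $z$ itself; the conditions $\gamma_i'$ are exactly the conditions $\gamma_i$ modified to reflect that $c$ must be included among the forbidden rows from step $i=1$ onward.

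For the ($\Rightarrow$) direction, given any nonzero path through $z$, simply read off the row it selects in each column to witness the $\alpha_k$'s; at each step $i$ the very existence of the path's entry in column $j_{i+1}$ in a row outside $\{\alpha_1,\ldots,\alpha_i,c\}$ is incompatible with every clause of the three-clause obstruction, so $\gamma_i$ (respectively $\gamma_i'$) holds.

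The main technical task, and the place where care is required, is verifying that clauses (1), (2), (3) in the definition of each obstruction are jointly exhaustive: if column $j_{i+1}$ of $A$ fails to contain any nonzero entry in a row outside $\{\alpha_1,\ldots,\alpha_i,c\}$, then one of the three clauses describes why. This reduces to a case split on the location of $v(j_{i+1})$ relative to the ordered row indices $i_1<\cdots<i_p$ of $A$, namely whether $v(j_{i+1})>i_p$, or $i_{s_0}<v(j_{i+1})<i_{s_0+1}$ strictly for some $s_0$, or $v(j_{i+1})=i_{s_0}$ for some $s_0$; in each subcase the structure of $Z^{(v)}$ (entries to the right of a $1$ in any row, and entries above $v(j_{i+1})$ whose column $v^{-1}(\cdot)$ lies to the left of $j_{i+1}$, are forced to be zero) pins down exactly which rows can carry a nonzero entry in column $j_{i+1}$, and the clauses record precisely when every such row happens to belong to the forbidden set. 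Note that the case $v(j_{i+1})<i_1$ is absent from the clauses because it would make the column identically zero in $A$, contradicting the standing assumption (via Lemma \ref{l.LD}) that $A$ has no zero column. Carrying out this case analysis cleanly, together with the column-by-column induction above, completes the proof.
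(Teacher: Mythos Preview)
Your proposal is correct and follows essentially the same approach as the paper. The paper does not give a separate proof for this lemma; the lemma is stated as a summary of the column-by-column construction immediately preceding it, in which the $\gamma_i$ (and $\gamma_i'$) are \emph{defined} as the complements of the obstructions to extending a partial nonzero path forced to pass through $z_{i_c,j_d}$. Your write-up makes explicit the two directions and the exhaustiveness check on the three clauses, which the paper leaves implicit, and your invocation of the standing no-zero-column assumption (via Lemma~\ref{l.LD}) to rule out the missing case $v(j_{i+1})<i_1$ is exactly the contextual hypothesis the paper is tacitly using.
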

\begin{example}\label{e.gamma_i}
	Let us take the same $S_n,v,w$ as in Example \ref{e.discardingminors} and look at the southwest sub-matrix $Z_{33}^{(v)}$. We have:
	\[
	Z_{33}^{(v)}=\begin{bmatrix}
		z_{31}&1&0\\
		z_{21}&z_{22}&1\\
		z_{11}&z_{12}&z_{13}
	\end{bmatrix}\]	
	Here one sees that $p=3, i_k=k=j_k$ for $1\leq k\leq 3$ w.r.t $Z^{(v)}$. Suppose we are to check whether there is a non zero path through $z_{13}$. Then, following the discussion above the lemma, it is equivalent to checking whether there is a non zero path through $z_{13}$ in $\begin{bmatrix}
		z_{31}&0\\
		z_{11}&z_{13}
	\end{bmatrix}$ or $\begin{bmatrix}
		{1}&0\\
		z_{12}&z_{13}
	\end{bmatrix}$. Similar to how we did in Example \ref{e.delta_i}, we write down the conditions (1),(2),(3) for a general $i$ and taking $\gamma_i$ as its complement, we see that for each i, $\gamma_i$ holds, i.e, there is a non zero path through $z_{13}$ in the $2\times2$ sub-matrices of $Z_{33}^{(v)}$ which then gives a non zero path through $z_{13}$ in $Z_{33}^{(v)}$. Here we do not need the conditions $\gamma_i'$, because the column with respect to which, we are checking the nonzero path in the $2\times 2$ submatrix, is not the first column, i.e., $d\neq 1$.
\end{example}

\section{Conclusion: The algorithm for testing inhomogeneity of KL-ideals}\label{s.whichwtodiscard}
In this section, we provide an algorithm for testing a KL-ideal for inhomogeneity.
\subsection{The algorithm}\label{ss.conclusion}
\begin{itemize}
 \item Discard $w=w_0=n\ n-1\ n-2\ \cdots\ 1$ (See Lemma \ref{l.nodefminors} in this regard).
 \item Discard all pairs $(v,w)$ for which $\Tilde{R}_v\nleq\Tilde{R}_w$ (See Theorem \ref{t.wholeRing} in this regard).
 \item Fix any pair $(v,w)$ which is not yet discarded. 
 \item Use \S \ref{ss.rmatrix} to obtain a formula for $\Tilde{R}_w$.
 \item Then use the concept of an essential set from \S \ref{ss.redminors} to discard the unnecessary defining minors of the KL-ideal $I_{v,w}$.
 \item Next, look at the structure of the remaining defining minors of $I_{v,w}$. Use Lemmas \ref{l.zeroRorC}, \ref{l.LD} and Proposition \ref{p.delta_i} to discard those defining minors, which give singular determinant. While using the results Lemma \ref{l.LD} and Proposition \ref{p.delta_i}, we should keep in mind the change in convention, which is mentioned just before Lemma \ref{l.LD}.
 \item Then apply Theorem \ref{t.hompaththm} to get hold of all inhomogeneous defining minors, which are not yet discarded. In doing so, we have to use Lemma \ref{l.gamma_i} and Remark \ref{r.reducingpaths}. While using Lemma \ref{l.gamma_i} and Remark \ref{r.reducingpaths} also, we should keep in mind the change in convention, which is mentioned just before Lemma \ref{l.LD}.
 \item Now, we have a set of generators (consisting of certain defining minors) of the KL-ideal $I_{v,w}$, out of which some generators are inhomogeneous, and some are homogeneous.
 \item We declare the KL-ideal $I_{v,w}$ to be \textbf{inhomogeneous} if it satisfies the sufficient condition(s) given by the contrapositive statement of Lemma \ref{l.homthm} (taking into account Remark \ref{r.lemma-homthm}). To do this checking, we take help from the theorems \ref{t.poldiv1},\ref{t.poldiv2}, and \ref{t.poldiv3}.
\end{itemize}
\section{Examples that relate to the work of \cite{neye2023grobner}}
Let $X$ denote the set of all pairs $(v,w)\in S_n\times S_n$ such that $w\leq v$ in the Bruhat order and $(v,w)$ satisfy conditions (a) and (b) in Proposition 6.4 of \cite{neye2023grobner}. Then from the Proposition 6.4 of \cite{neye2023grobner}, it follows that for all $(v,w)\in X$, the KL-ideal $I_{v,w}$ is standard homogeneous.\par
In this section, we will provide 2 examples, the first one of which (namely, Example \ref{e.Neye1}) shows that there exists $(v,w)$ in the complement of the set $X$ (complement taken inside the ambient set $S_n\times S_n$) such that $I_{v,w}$ is standard homogeneous. The second example (namely, Example \ref{e.Neye2}) illustrates our algorithm (mentioned above in \S \ref{ss.conclusion}) and also shows that there exists $(v,w)$ in the complement of the set $X$ for which $I_{v,w}$ is inhomogeneous.
\begin{example}\label{e.Neye1}
For any $n\geq 5$, consider two permutations $v,w$ in $S_n$ such that $w=n \ 1 \ 2\ n-1 \ n-2 \dots 3$ (notice that $w$ is not $132$ pattern avoiding). Let $v$ be such that $v(1)=n, v(2)=1, v(3)=2 $ and the remaining entries $v(i)$ be a permutation of the remaining $n-3$ numbers such that:
\vspace{0.1in}\\
$\exists$ a substring of $v$, say, $(a_1,a_2,a_3)$ of $321$ pattern and $\exists$ a substring $(c_1,c_2,c_3)$ of $132$ pattern in $w$ with the following properties :\\
(a) $c_3 \geq a_2$\\
(b) $w^{-1}(c_2)\geq v^{-1}(a_2).$

For example, in case of $S_5$, one such $v$ is $51243$. Take $w$ to be the same as $v$. In this case take $(a_1,a_2,a_3)=(5,4,3)$ and $(c_1,c_2,c_3)=(1,4,3)$ , and for $n\geq 6$, one such $v$ is $n \ 1 \ 2 \ n-2 \ n-3 \dots $ and take $(a_1,a_2,a_3)=(n,n-2,n-3)$ and $(c_1,c_2,c_3)=(1,n-2,n-3)$.\\
Now, once we have our $v$ and $w$, we notice first that this $v,w \in S_n $ doesn't belong to Neye's set. Now, we show that for these kind of $v,w$, $I_{v,w}$ is homogeneous. In fact, in these cases, $I_{v,w}$ is generated by monomials.\\
 To do so, first notice that here, the essential set is $\{(n-1,2),(n-2,3)\}$. As such, the relevant submatrices are $Z_{22}^{(v)}$ and $Z_{33}^{(v)}$ and we are supposed to take $\tilde{r}_{22}^{(w)}+1=2$ minors from $Z_{22}^{(v)}$ and $\tilde{r}_{33}^{(w)}+1=2$ minors from $Z_{33}^{(v)}$ respectively.\\
 For $Z_{22}^{(v)}$, we see that the only non-zero minors are of the form :
\[
\begin{vmatrix}
	0&z_{j2}\\
	1&z_{i2}
\end{vmatrix}
\quad \textnormal{ where } \quad 2\leq i<j\leq 5
\]
and so, the corresponding generators are : $z_{j2} \ (2\leq j\leq 5)$.\\
And, for $Z_{33}^{(v)}$, the non-zero minors are of the following types:
\begin{flalign}
(1) \ &\begin{vmatrix}
		0&z_{ij}\\
		1&0\\
	\end{vmatrix}\notag
	\quad \textnormal{ where } \quad 2\leq i\leq 5, \ j \in \{1,2\}.&\\
(2) \ &\begin{vmatrix}
	z_{j2}&z_{j3}\\
	z_{i2}&z_{i3}\\
\end{vmatrix}\notag
\quad \textnormal{ where } \quad 2\leq i<j \leq 5&\\
\notag
\end{flalign}
From (1), we see that we get minors of the form: $z_{ij} \ 2\leq i\leq 5, \ j \in \{1,2\}.$\\
From (2), we see that we get minors of the form: $z_{j2}z_{i3}-z_{j3}z_{i2}$ but now we notice that the minors obtained here are redundant as we already have the monomials $z_{j2} \ (2\leq j\leq 5)$ obtained from the minors of  $Z_{22}^{(v)}$.\\
Hence, we see that the only generators of $I_{v,w}$ are monomials and hence $I_{v,w}$ is a monomial ideal and therefore, standard homogeneous.
\end{example}
\begin{example}\label{e.Neye2}
Consider $S_n$ for $n\geq 6$.\\
	 Let, $w\in S_n$ such that  $w(1)=n, w(2)=n-2, w(30)=n-1, w(4)=n-4, w(5)=n-3$ and $w(i)=n-i+1 \ \forall \ 6\leq i\leq n$. Then we have : \\
	 \[
	 \tilde{R}_w =
	 \begin{bmatrix}
	 	1 & 2 &3 &4 &5 & 6 & \dots & n\\
	 	\vdots & \vdots & \vdots  & \vdots & \vdots & \vdots & \dots & \vdots\\
	 	1 & 2 &3 &4 &5 & 6 & \dots & 6\\
	 	1 & 2 &3 &4 &5 & 5 & \dots & 5\\
	 	1 & 2 &3 &3 &4 & 4 & \dots & 4\\
	 	1 & 2 &3 &3 &3 & 3 & \dots & 3\\
	 	1 & 1 &2 &2 &2 & 2 & \dots & 2\\
	 	1 & 1 &1 &1 &1 & 1 & \dots & 1\\
	 \end{bmatrix}
	 \]
	 Observe that irrespective of any $v\in S_n$, there will be no minors $Z_{st}^{(v)}$ for $t\geq 5$. This is because for $t\geq 5$ the column entries of $\tilde{R}_w$ look like :
	 \[
	 \tilde{r}_{st}^{w}=\begin{cases}
	 	t &\textnormal{for}\quad 1\leq s \leq n-t+1\\
	 	n-s+1 &\textnormal{for} \quad n-t+2 \leq s \leq n
	 \end{cases}
	 \]
	 and this clearly shows that $\tilde{r}_{st}^{w} > min\{n-s+1,t\}$ for $t\geq 5$ and so from Section \ref{ss.defminors} the observation follows.
	 \\~\\
	 Also, let $v \in S_n$ be such that $v(1)=n-3,v(2)=n-1,v(3)=n-2,v(4)=n-4$ and for the value of $v$ in remaining indices, one can take any permutation of the letters $\{1,2,\dots,n-5,n\}$ such that $\tilde{R}_v \leq \tilde{R}_w$ in the remaining $(n-4)$ columns of the matrix $\tilde{R}_v$. Notice that one can simply take $v(5)=n$ and $v(i)=w(i)$ for $6\leq i\leq n$ as one such permutation and this ensures that such permutations do exist.
	 \par
	 Now by doing so, one can check then we have $\tilde{R}_v \leq \tilde{R}_w$ altogether and as such, by Theorem \ref{t.wholeRing}, $I_{v.w}$ is not the whole ring.
	For instance, for $v(i)$ as defined above, and the particular case mentioned where $v(5)=n$ and $v(i)=w(i)$ for $6\leq i\leq n$, we have :
	\[
	\tilde{R}_v=\begin{bmatrix}
		1&2&3&4&5&\dots&n\\
		\vdots&\vdots&\vdots&\vdots&\vdots&\vdots\\
		1&2&3&4&5&\dots&5\\
		1&2&3&3&4&\dots&4\\
		0&1&2&2&3&\dots&3\\
		0&1&1&1&2&\dots&2\\
		0&0&0&0&1&\dots&1
	\end{bmatrix}
	\]
	and from this, one can readily see  $\tilde{R}_v \leq \tilde{R}_w$.\par
	Now, one can check that the only relevant minors for $Z^{(v)}$ come from the submatrices $Z_{n-1,2}^{(v)}$ and $Z_{n-3,4}^{(v)}$ which are :\\
	\[
	\begin{bmatrix}
		z_{21}&1\\
		z_{11}&z_{12}
	\end{bmatrix}
	\quad \textnormal{and} \quad
	\begin{bmatrix}
	1&0&0&0\\
	z_{31}&0&1&0\\
	z_{21}&1&0&0\\
	z_{11}&z_{12}&z_{13}&z_{14}
	\end{bmatrix}
	\quad \textnormal{respectively.}
	\]
	Hence, the two generators of $I_{v,w}$ are $f_1=z_{21}z_{12}-z_{11}$ and $f_2=-z_{14}$ respectively and hence $I_{v,w}=\langle f_1,f_2 \rangle$ is inhomogeneous as follows from Lemma \ref{l.homthm}
Notice that the first minor will give an inhomogeneous polynomial also follows directly from Theorem \ref{t.hompaththm}.\\ \\
In fact, for each $n\geq 6$, the above example provides corresponding $v,w \in S_n$ for which $I_{v,w}$ is inhomogeneous.
\end{example}
\section{Mutation : A possible way to find a necessary and sufficient condition for homogeneity}\label{s.iffconditionHom}
\subsection{The process Mutation}\label{ss.theprocessmutation}
Let $f_1, f_2,\dots, f_m$ be finitely many polynomials in $\mathbb{C}[x_1\dots,x_n]$ and $f_i=\sum_{k=1}^{n_i}X_{ik}$ where $X_{ik}$ denote the terms of $f_i$ including constant coefficients ( here, $n$ denotes any positive integer.)
\begin{defi}\label{d.stageNterms}
	An expression of the form $\frac{X_{r_{0}s_{0}}\cdot X_{r_{1}s_{1}^{'}}\cdot X_{r_{2}s_{2}^{'}}\dots X_{r_{n}s_{n}^{'}}}{X_{r_{1}s_{1}}\cdot X_{r_{2}s_{2}}\cdot X_{r_{3}s_{3}}\dots X_{r_{n+1}s_{n+1}}}$ where $s_j \neq s_j^{'} \ \forall \ j\in\{1,2\dots,n\}$ and the denominator divides the numerator, is called a ``stage n monomial of $\{f_1,\dots,f_m\}$". \\
	Also, an expression of the form $\frac{X_{r_{0}s_{0}}\cdot X_{r_{1}s_{1}^{'}}\cdot X_{r_{2}s_{2}^{'}}\dots X_{r_{n}s_{n}^{'}}}{X_{r_{1}s_{1}}\cdot X_{r_{2}s_{2}}\cdot X_{r_{3}s_{3}}\dots X_{r_{n+1}s_{n+1}}}$$\cdot X_{r_{n+1}s_{n+1}^{'}}$ where $s_j \neq s_j^{'} \ \forall \ j\in\{1,2\dots,n+1\}$ and the denominator divides the numerator, is called a ``stage n generated term of $\{f_1,\dots,f_m\}$".
\end{defi}
Observe that a stage n generated term of $\{f_1,\dots, f_m\}$ may not always exist.\\
Fix $i\in \{1,2\dots,m\}$ arbitrarily. \\
Let, $\tilde{f_i}=X_{ij_1}+X_{ij_2}+\dots + X_{ij_p}$ be a part/portion of $f_i$.\\
Suppose that $\exists$ finitely many terms in $\{f_1,\dots,f_m\}$(other than the terms in $\tilde{f_i}$) such that they divide the terms of $\tilde{f_i}$.
\begin{remark}\label{r.mutation1}
	Note that there may exist more than 1 term which divide a single term of $\tilde{f_i}$.
\end{remark}
Let $\{X_{r_1s_1},\dots, X_{r_ls_l}\}$ be a collection of all such terms. (some of the $r_1,\dots ,r_l$ may be same.)\\
That is, $\{X_{r_1s_1},\dots, X_{r_ls_l}\}$ is an exhaustive list of terms of the $f_i$ , $1\leq i\leq m$, which divide some term of $\tilde{f_i}$.\\
Construct polynomials $\tilde{g}_{r_1}^{(0)},\dots, \tilde{g}_{r_m}^{(0)}$ in $\mathbb{C}[x_1,\dots,x_n]$ such that each term in any one of them is a stage 0 monomial of $\{f_1,\dots , f_m\}$ and such that $\sum_{i=1}^{m}\tilde{g}_{r_i}^{(0)}\cdot f_{r_i}= \tilde{f_i}$ + some stage 0 generated terms of $\{f_1,\dots, f_m\}.$\\
There will be 3 cases now: \\
Case 1 : If there are no stage 0 generated terms of $\{f_1,\dots , f_m\}$, we say that ``\textbf{the process mutation terminates after stage 0 with respect to $\tilde{f_i}$}".\\
Case 2 : If there are some stage 0 generated terms of $\{f_1,\dots,f_m\}$, but $\nexists$ any terms of any $f_i(1\leq i\leq m)$ which divide at least one of the stage 0 generated terms of ${f_1,\dots,f_m}$. In this case, we say that ``\textbf{the process mutation stops abruptly after stage 0 with respect to $\tilde{f_i}$}".\\
Case 3 : If there are some stage 0 generated terms of $\{f_1,\dots , f_m\}$ and $\exists$terms of the form $f_i(1\leq i\leq m)$ which divide all the stage 0 generated terms. In this case, we construct polynomials $\tilde{g_i}^{(1)}(1\leq i\leq m)$ in such a way that all the following properties hold true:\\
$\bullet$ $\tilde{g_i}^{(1)}$ is a modification of $\tilde{g_i}^{(0)}$ by adding or subtracting some terms(or zero) and the formation of $\tilde{g_i}^{(1)}$
doesn't make any term of any $\tilde{g_j}^{(0)}$ to cancel out.\footnote{If the formation of any $\tilde{g_i}^{(1)}$(in any possible way) involves the cancellation of some terms of any $\tilde{g_j}^{(0)}$ then also we say that ``the process mutation stops abruptly after stage 0 with respect to $\tilde{f_i}$."}.\\
$\bullet$ The new terms (which are in $\tilde{g_i}^{(1)}$ but not in $\tilde{g_i}^{(0)}$) are all stage 1 monomials of $\{f_1, \dots,f_m\}$ such that $\sum_{j=1}^{l}\tilde{g}_{j}^{(0)}\cdot f_{j}=\tilde{f_i}$+ some stage 1 generated terms of $\{f_1,\dots,f_m\}$.\\
Again, there will be 3 cases like before, proceed similarly. In case, this process mutation terminates after stage $n_0$ with respect to $\tilde{f_i}$, we say that ``the process mutation terminates w.r.t $\tilde{f_i}$ and we can write $\tilde{f_i}=\sum_{j=1}^{m}\tilde{g}_{j}^{(n_0)}\cdot f_{j}$.
\begin{theorem}\label{t.iffhomthm}
	I=$\langle f_1,\dots,f_m \rangle$ is homogeneous $\iff$ the process mutation terminates w.r.t all the homogeneous components of all the polynomials in the set $\{f_1,\dots,f_m\}$.
\end{theorem}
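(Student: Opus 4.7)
The plan is to prove the two implications separately, as the forward and reverse directions are of quite different character.

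For the easy direction $(\Leftarrow)$: Suppose the process mutation terminates with respect to every homogeneous component $h_{ij}$ of every generator $f_i$. By the definition of termination in \S\ref{ss.theprocessmutation}, we obtain an explicit identity $h_{ij}=\sum_{k=1}^{m}\tilde{g}_k^{(n_0)}f_k$, exhibiting $h_{ij}\in I$. Since a polynomial belongs to a homogeneous ideal if and only if all its homogeneous components do, and since each generator $f_i=\sum_j h_{ij}$ is a sum of elements of $I$, the ideal $I$ coincides with $\langle h_{ij}\rangle_{i,j}$, generated by homogeneous polynomials. Hence $I$ is homogeneous. This direction amounts to unwinding definitions.

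For the harder direction $(\Rightarrow)$: Assume $I$ is homogeneous, and fix a homogeneous component $h_{ij}$. Then $h_{ij}\in I$, so there exist polynomials $g_1,\ldots,g_m\in\mathbb{C}[x_1,\ldots,x_n]$ with $h_{ij}=\sum_{k=1}^{m}g_k f_k$. Expanding each $g_k=\sum_\alpha m_{k,\alpha}$ into monomials and each $f_k=\sum_s X_{k,s}$ into terms, we obtain
\[
h_{ij} \;=\; \sum_{k,\alpha,s} m_{k,\alpha}\cdot X_{k,s}.
\]
The products on the right split into two classes: those equal to a term $X_{i,j_r}$ of $h_{ij}$, and those which must cancel pairwise. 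For each productive product $m_{k,\alpha}\cdot X_{k,s'}=X_{i,j_r}$, the factor $m_{k,\alpha}=X_{i,j_r}/X_{k,s'}$ matches the form of a stage-$0$ monomial in the sense of Definition \ref{d.stageNterms}. Collecting these across all $k$ defines candidate polynomials $\tilde{g}_k^{(0)}$, and the surplus products $m_{k,\alpha}\cdot X_{k,s''}$ with $s''\neq s'$ are exactly the stage-$0$ generated terms that must be cancelled by further layers.

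The cancellation of these surplus terms against other monomials of the $g_k$ dictates the stage-$1$ monomials, and the same analysis applied recursively produces the full cascade of stages. Because the original sum $\sum_k g_k f_k$ has only finitely many monomial-term products in its support, the cascade must terminate after finitely many stages, yielding a mutation that terminates with respect to $h_{ij}$.

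The hard part will be demonstrating that one can choose the initial expression $h_{ij}=\sum g_k f_k$ (or successively repair a given one) so as to avoid the ``abrupt stop'' scenario of Case 2. A related subtlety is that the definition of a stage-$0$ monomial explicitly excludes using a term of $\tilde{f_i}$ itself as the divisor $X_{r_1 s_1}$, so representations of $h_{ij}$ in $I$ which only become visible via such self-division will not be captured by the procedure as written; the implication $(\Rightarrow)$ therefore requires showing that whenever $h_{ij}\in I$, at least one representation avoiding this pathology exists. The paper itself flags the ``random nature'' of the procedure, and this is precisely the delicate step; a natural plan is to start from any representation of $h_{ij}$ as an element of $I$, greedily match generated terms to new monomials of the $g_k$, use the finiteness of the support to bound the resulting cancellation tree, and reorganize the $g_k$ whenever an abrupt stop threatens.
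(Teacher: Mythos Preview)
Your approach is essentially the same as the paper's: both directions are handled as you describe, and for $(\Rightarrow)$ the paper likewise starts from an arbitrary representation $\tilde{f_i}=\sum_j g_j f_j$ and argues it can be reorganized into mutation form. The paper's mechanism for the ``hard part'' you flag is the following: take any monomial $X$ of some $g_k$ that is not a stage-$p$-monomial for any $p\ge 0$, and examine the products $X\cdot X_{ks}$; either such a product equals a term of $\tilde{f_i}$ or a stage-$l$ generated term (forcing $X$ to be a stage monomial after all, a contradiction), or it must cancel against another product $Y\cdot X_{\alpha\beta}$ with $Y$ of the same non-stage type, in which case both $X$ and $Y$ are simply discarded from the $g_j$'s. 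This is exactly the ``reorganize the $g_k$ whenever an abrupt stop threatens'' step you anticipate; the paper's own argument at this point is terse and does not address the subtleties you raise (e.g.\ that removing $X$ affects all products $X\cdot X_{ks}$ simultaneously, or the self-division exclusion), so your identification of these as the delicate issues is well-placed.
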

\begin{proof}
$\Longleftarrow$ obvious.\\
$\Longrightarrow$ Suppose $I=\langle f_1,\dots,f_m\rangle$ is homogeneous. Choose a $f_i$ ($1\leq i\leq m$). Let $\tilde{f_i}$ be a homogeneous component of $f_i$. Since $I$ is homogeneous, we must have that $\tilde{f_i}\in I$. This implies that
$$\tilde{f_i}=\Sigma_{j=1}^{m}g_jf_j$$
for some polynomials $g_j$. \\
\noindent\textbf{Claim:} Without loss of generality, we can assume that all the $g_j$ in the above expression are of the form $\tilde{g_j}^{(n_0)}$, for some integer $n_0\geq 0$. \\
\noindent\textbf{Proof of the claim:} Clearly, the $g_j$'s contain some terms which are stage-$0$-monomials of $\{f_1,\dots,f_m\}$ (since otherwise, the sum $\Sigma_{j=1}^{m}g_jf_j$ can't yield $\tilde{f_i}$). Suppose $X$ is a term of some $g_k$ ($1\leq k\leq m$), which is not a stage-$p$-monomial for any integer $p\geq 0$. Then $2$ cases arise:\\
\noindent Case 1: $X.X_{ks}=$ some stage $l$ generated term.\\
Say, $X.X_{ks}=\frac{X_{r_0s_0'}.X_{r_1s_1'}.....X_{r_ls_l'}}{X_{r_1s_1}.X_{r_2s_2}.....X_{r_{l+1}s_{l+1}}}X_{r_{l+1}s_{l+1}'}$. Then,
$$X=\frac{X_{r_0s_0'}.X_{r_1s_1'}.....X_{r_ls_l'}.X_{r_{l+1}s_{l+1}'}}{X_{r_1s_1}.X_{r_2s_2}.....X_{r_{l+1}s_{l+1}}.X_{ks}},$$
which is a stage-$(l+1)$-monomial or a stage-$l$-monomial, depending upon whether or not $X_{ks}$ divides some term in the numerator. This is a contradiction. So this case can't occur.\\
\noindent Case 2: $X.X_{ks}=$ some term which is not a stage-$l$-generated term for any $l\geq 0$.\\
Then either $X.X_{ks}$ is a term of $\tilde{f_i}$ or it must get canceled with some other term of the sum $\Sigma_{j=1}^{m}g_jf_j$.\\
If $X.X_{ks}$ is a term of $\tilde{f_i}$, then $X$ is a stage-$0$-monomial, and we obtain a contradiction.\\
If $X.X_{ks}$ gets canceled with some other term of the sum $\Sigma_{j=1}^{m}g_jf_j$, then (avoiding case 1), it can only get canceled with a term of the type $Y.X_{\alpha\beta}$, where $Y$ is again of the for of $X$ (that is, $Y$ is a term of some $g_k$ which is not a stage-$p$-monomial for any integer $p\geq 0$). \\
In this case, we can throw away both $X$ and $Y$ from the $g_j$'s and modify the $g_j$'s.
\end{proof}
\subsection{Application of Mutation}\label{ss.applicationmutation}
Suppose $I=\langle f_1,\dots.f_m\rangle$. Say, $f_i$ is inhomogeneous [There may be other $f_j$'s which are inhomogeneous, but $f_i$ is one such]. Say, $\tilde{f_i}$ is a homogeneous component of $f_i$. For checking the homogeneity of the ideal $I$, we should be able to show that $\tilde{f_i}$ belongs to $I$, for every homogeneous component $\tilde{f_i}$ of each inhomogeneous $f_i$. We check it via the following steps: \\ \\
\textbf{Step 1} : There must exist finitely many terms among the $f_j , j\neq i, 1\leq j\leq m$ such that for every term of $\tilde{f_i}$ , at least one term from the above finite collection divides it.\\
Say, $X_{ij}$ denotes an arbitrary term of $\tilde{f_i}$. Then, there must exist at least one $X_{rs}(r\neq i)$ such that $X_{rs} | X_{ij}$.\\
\textbf{Step 2} : Look at all the generated terms of stage 0. They are of the form $\frac{X_{ij}}{X_{rs}} \cdot X_{rs^{'}}$ where $ s\neq s'$ and $r\neq i$.\\
Either these stage 0 generated terms should cancel out among themselves, i.e, $\frac{X_{ij}}{X_{rs}} \cdot X_{rs^{'}}= -\frac{X_{ij'}}{X_{r_0s_0}} \cdot X_{r_0s_0^{'}}$ for some term $X_{ij'}$ of $\tilde{f_i}$ other than $X_{ij}$, where $ s\neq s', s_0 \neq s_0^{'}$, $j\neq j'$, and $r, r_0\neq i$.\\
OR\\
There must exist finitely many terms in the $f_j, 1\leq j\leq m$, such that \\
(a) Each $\frac{X_{ij}}{X_{rs}} \cdot X_{rs^{'}}$ is divisible by at least one such term.\\
and\\
(b) For each $\frac{X_{ij}}{X_{rs}} \cdot X_{rs^{'}}$ , the term of $f_j$ that divides it, should not be equal to $X_{rs^{'}}$.\\
In the latter case, choose exactly one (any one) term of the $f_j$ that divides $\frac{X_{ij}}{X_{rs}} \cdot X_{rs^{'}}$. Say, we have chosen the term $X_{r_1s_1}$, which divides $\frac{X_{ij}}{X_{rs}} \cdot X_{rs^{'}}$ and for which $(r_1,s_1)\neq(r,s')$.\\
\textbf{Step 3} : Look at all generated terms of stage 1, formed by $\frac{X_{ij}}{X_{rs}}\frac{X_{rs'}}{X_{r_1s_1}}$. These are of the form $\frac{X_{ij}}{X_{rs}} \cdot \frac{ X_{rs^{'}}}{X_{r_1s_1}}\cdot X_{r_1s_1^{'}}$ where $i\neq r, s\neq s',s_1 \neq s_1^{'}$ and $(r,s')\neq (r_1, s_1)$.\\
Either these stage 1 generated terms should cancel out among themselves, i.e,\\
$\frac{X_{ij}}{X_{rs}} \cdot \frac{ X_{rs^{'}}}{X_{r_1s_1}}\cdot x_{r_1s_1^{'}}=- \frac{X_{ij'}}{X_{r_0s_0}} \cdot \frac{ X_{r_0s_0^{'}}}{X_{p_1q_1}}\cdot X_{p_1q_1^{'}}$ where $i\neq r,r_0, s\neq s',s_1 \neq s_1^{'}, s_0\neq s_0^{'}, q_1 \neq q_1^{'} $ and $(r_0,s_0')\neq (p_1,q_1)$ for some term $X_{ij'}$ of $\tilde{f_i}$ other than $X_{ij}$.\\
OR\\
There must exist finitely many terms in the $f_j, 1\leq j\leq m$, such that \\
(a) Each $\frac{X_{ij}}{X_{rs}} \cdot \frac{ X_{rs^{'}}}{X_{r_1s_1}}\cdot X_{r_1s_1^{'}}$ is at least divisible by one such term.\\
and\\
(b) Fr each $\frac{X_{ij}}{X_{rs}} \cdot \frac{ X_{rs^{'}}}{X_{r_1s_1}}\cdot X_{r_1s_1^{'}}$, the term of $f_j$ that divides it, should not be equal to $X_{r_1s_1^{'}}$.\\
\textbf{Step 4} :\\
\vdots\\
Go upto step ($n_0 + 1$)(which represents the transition from stage ($n_0-1$) to stage $n_0$).\\
\textbf{Step ($n_0+2$)} : At this step, look at all the generated terms of stage $n_0$. They must cancel out among themselves, if $I$ is homogeneous.
\subsubsection{How to check every step of \S \ref{ss.applicationmutation}?}\label{sss.how-to-check}
To check step 1, we should follow theorems \ref{t.poldiv1}, \ref{t.poldiv2}, \ref{t.poldiv3}. Before we move on to the next steps, we state the following 2 facts along with a definition:\\
\noindent\textbf{Fact 2:}\label{fact2} Suppose $X,Y,m,n$ are 4 monomials in certain variables (say, $x_1,\ldots,x_N$) such that $m$ and $X$ have no variables in common, $n$ and $Y$ have no variables in common, and $mX=nY$. For any two monomials $A$ and $B$, let $A\setminus B$ denote the product of all the variables (counting multiplicities) which are in $A$ but not in $B$. Then 
$$m=(n\setminus X)\sqcup(Y\setminus X)$$
and
$$n=(m\setminus Y)\sqcup(X\setminus Y),$$
where $\sqcup$ denotes disjoint union.
\begin{proof}
 Obvious.
\end{proof}
\begin{defi}\label{tuplesofpaths}
Let $f$ be a minor of the $Z^{(v)}$ matrix. Let $X$ be a path in $f$. Say, $X=z_{i_1j_1}\cdots z_{i_pj_p}$. Then we say that the set $\{(i_1,j_1),\ldots,(i_p,j_p)\}$ is the \textit{set of tuples corresponding to the path} $X$. 
\end{defi}

\noindent\textbf{Fact 3:}\label{fact3} Suppose $I$ is a KL ideal, having defining minors $f_1,\ldots,f_m$. Say, $f_i$ is inhomogeneous (for some $1\leq i\leq m$), and $\tilde{f_i}$ is a homogeneous component of $f_i$. Let $X_{ij_0}$ be an arbitrary term of $\tilde{f_i}$. Say, $X_{ij_0}=MX_{rs}$, where $X_{rs}$ is a term of $f_r$ for some $r\neq i$, and $M$ is a monomial. Say, the underlying path of $X_{ij_0}$ is given by the tuples $\{(i_1,j_1),\ldots,(i_p,j_p)\}$. As $X_{rs}$ is a term of $f_r$ (which is a determinant), therefore $X_{rs}$ corresponds to a unique path, say given by the tuples $\{(r_1,s_1),\ldots,(r_k,s_k)\}$. Then
$M$ corresponds to the subpath given by the tuples $\{(i_1,j_1),\ldots,(i_p,j_p)\}\setminus\{(r_1,s_1),\ldots,(r_k,s_k)\}$, and this set of tuples corresponding to $M$ is \textit{disjoint from} the set of tuples corresponding to any term of $f_r$ (in the sense that if we take the set of all first (or second) coordinates of the tuples in $M$, that set is disjoint from the set of all first (or second) coordinates of the tuples corresponding to any term of $f_r$).
\begin{proof}
Any $f_r$ is a determinant, and each term of a given $f_r$ is a path corresponding to $f_r$. Hence if $X$ and $Y$ are two distinct terms of $f_r$, and if the tuples corresponding to $X$ is given by the set $\{(a_1,b_1),\ldots,(a_k,b_k)\}$. Then the tuples corresponding to $Y$ is given by a set of the form $\{(\tau(a_1),\sigma(b_1)),\ldots,(\tau(a_k),\sigma(b_k))\}$, where $\tau$ and $\sigma$ are some permutations of $\{a_1,\ldots,a_k\}$ and $\{b_1,\ldots,b_k\}$ respectively. \par 
If the set of tuples corresponding to $M$ is disjoint from $X$, then it will also be disjoint from $Y$ (due to the reason mentioned in the paragraph above).
\end{proof}

\noindent\textbf{Checking the next steps:} In step 2, if we have:
$\frac{X_{ij}}{X_{rs}} \cdot X_{rs^{'}}= -\frac{X_{ij'}}{X_{r_0s_0}} \cdot X_{r_0s_0^{'}}$ for some term $X_{ij'}$ of $\tilde{f_i}$ other than $X_{ij}$, where $ s\neq s', s_0 \neq s_0^{'}$, $j\neq j'$, and $r, r_0\neq i$, then by \hyperref[fact2]{Fact 2}, we must have:
$$X_{ij}\setminus X_{rs}=((X_{ij'}\setminus X_{r_0s_0})\setminus X_{rs'})\sqcup(X_{r_0s_0'}\setminus X_{rs'})$$
and
$$X_{ij'}\setminus X_{r_0s_0}=((X_{ij}\setminus X_{rs})\setminus X_{r_0s_0'})\sqcup(X_{rs'}\setminus X_{r_0s_0'}).$$
This is a condition on subpaths of certain paths.\\
If we are not in the above case, then there must exist a path in some $f_j$, which is not equal to the path corresponding to $X_{rs'}$, and the tuples corresponding to which must be a subset of the tuples corresponding to $(X_{ij}\setminus X_{rs})\sqcup X_{rs'}$ (Note that here is a disjoint union $\sqcup$ because of \hyperref[fact3]{Fact 3}).\\
\vdots \\
And so on, for all steps upto step $(n_0+2)$.\\ \\
In fact, at every step we will get some conditions as in step 2 above, on subpaths of certain paths.\\ \\

\begin{acknowledgements}
	We would like to thank Professor Alexander Yong for his painstaking job of answering all our queries, when we were reading the paper \cite{woo2023schubert} to get ourselves introduced to the research problem presented in this paper.
\end{acknowledgements}

\end{document}